\documentclass{amsart}
\usepackage[foot]{amsaddr}
\usepackage{graphicx}
\parindent0pt
\parskip4pt
\def\cal{\mathcal}


\typeout{TCILATEX Macros for Scientific Word and Scientific WorkPlace 5.5 <06 Oct 2005>.}
\typeout{NOTICE:  This macro file is NOT proprietary and may be 
freely copied and distributed.}
\makeatletter

\ifx\pdfoutput\relax\let\pdfoutput=\undefined\fi
\newcount\msipdfoutput
\ifx\pdfoutput\undefined
\else
 \ifcase\pdfoutput
 \else 
    \msipdfoutput=1
    \ifx\paperwidth\undefined
    \else
      \ifdim\paperheight=0pt\relax
      \else
        \pdfpageheight\paperheight
      \fi
      \ifdim\paperwidth=0pt\relax
      \else
        \pdfpagewidth\paperwidth
      \fi
    \fi
  \fi  
\fi

%

%
\newcount\@hour\newcount\@minute\chardef\@x10\chardef\@xv60
\def\tcitime{
\def\@time{%
  \@minute\time\@hour\@minute\divide\@hour\@xv
  \ifnum\@hour<\@x 0\fi\the\@hour:%
  \multiply\@hour\@xv\advance\@minute-\@hour
  \ifnum\@minute<\@x 0\fi\the\@minute
  }}%


\def\x@hyperref#1#2#3{%
   \catcode`\~ = 12
   \catcode`\$ = 12
   \catcode`\_ = 12
   \catcode`\# = 12
   \catcode`\& = 12
   \catcode`\% = 12
   \y@hyperref{#1}{#2}{#3}%
}

\def\y@hyperref#1#2#3#4{%
   #2\ref{#4}#3
   \catcode`\~ = 13
   \catcode`\$ = 3
   \catcode`\_ = 8
   \catcode`\# = 6
   \catcode`\& = 4
   \catcode`\% = 14
}

\@ifundefined{hyperref}{\let\hyperref\x@hyperref}{}
\@ifundefined{msihyperref}{\let\msihyperref\x@hyperref}{}

\@ifundefined{qExtProgCall}{\def\qExtProgCall#1#2#3#4#5#6{\relax}}{}
%
%
%
%
\def\QCTOpt[#1]#2{%
  \def\QCTOptB{#1}
  \def\QCTOptA{#2}
}
\def\QCTNOpt#1{%
  \def\QCTOptA{#1}
  \let\QCTOptB\empty
}
\def\Qct{%
  \@ifnextchar[{%
    \QCTOpt}{\QCTNOpt}
}
\def\QCBOpt[#1]#2{%
  \def\QCBOptB{#1}%
  \def\QCBOptA{#2}%
}
\def\QCBNOpt#1{%
  \def\QCBOptA{#1}%
  \let\QCBOptB\empty
}
\def\Qcb{%
  \@ifnextchar[{%
    \QCBOpt}{\QCBNOpt}%
}
\def\PrepCapArgs{%
  \ifx\QCBOptA\empty
    \ifx\QCTOptA\empty
      {}%
    \else
      \ifx\QCTOptB\empty
        {\QCTOptA}%
      \else
        [\QCTOptB]{\QCTOptA}%
      \fi
    \fi
  \else
    \ifx\QCBOptA\empty
      {}%
    \else
      \ifx\QCBOptB\empty
        {\QCBOptA}%
      \else
        [\QCBOptB]{\QCBOptA}%
      \fi
    \fi
  \fi
}
\newcount\GRAPHICSTYPE
\GRAPHICSTYPE=\z@
\def\GRAPHICSPS#1{%
 \ifcase\GRAPHICSTYPE
   \special{ps: #1}%
 \or
   \special{language "PS", include "#1"}%
 \fi
}%
%
%
%

\def\graffile#1#2#3#4{%
    \bgroup
	   \@inlabelfalse
       \leavevmode
       \@ifundefined{bbl@deactivate}{\def~{\string~}}{\activesoff}%
        \raise -#4 \BOXTHEFRAME{%
           \hbox to #2{\raise #3\hbox to #2{\null #1\hfil}}}%
    \egroup
}%
%
\def\draftbox#1#2#3#4{%
 \leavevmode\raise -#4 \hbox{%
  \frame{\rlap{\protect\tiny #1}\hbox to #2%
   {\vrule height#3 width\z@ depth\z@\hfil}%
  }%
 }%
}%
\newcount\@msidraft
\@msidraft=\z@
\let\nographics=\@msidraft
\newif\ifwasdraft
\wasdraftfalse

\def\GRAPHIC#1#2#3#4#5{%
   \ifnum\@msidraft=\@ne\draftbox{#2}{#3}{#4}{#5}%
   \else\graffile{#1}{#3}{#4}{#5}%
   \fi
}
\def\addtoLaTeXparams#1{%
    \edef\LaTeXparams{\LaTeXparams #1}}%
%

\newif\ifBoxFrame \BoxFramefalse
\newif\ifOverFrame \OverFramefalse
\newif\ifUnderFrame \UnderFramefalse

\def\BOXTHEFRAME#1{%
   \hbox{%
      \ifBoxFrame
         \frame{#1}%
      \else
         {#1}%
      \fi
   }%
}

\def\doFRAMEparams#1{\BoxFramefalse\OverFramefalse\UnderFramefalse\readFRAMEparams#1\end}%
\def\readFRAMEparams#1{%
 \ifx#1\end%
  \let\next=\relax
  \else
  \ifx#1i\dispkind=\z@\fi
  \ifx#1d\dispkind=\@ne\fi
  \ifx#1f\dispkind=\tw@\fi
  \ifx#1t\addtoLaTeXparams{t}\fi
  \ifx#1b\addtoLaTeXparams{b}\fi
  \ifx#1p\addtoLaTeXparams{p}\fi
  \ifx#1h\addtoLaTeXparams{h}\fi
  \ifx#1X\BoxFrametrue\fi
  \ifx#1O\OverFrametrue\fi
  \ifx#1U\UnderFrametrue\fi
  \ifx#1w
    \ifnum\@msidraft=1\wasdrafttrue\else\wasdraftfalse\fi
    \@msidraft=\@ne
  \fi
  \let\next=\readFRAMEparams
  \fi
 \next
 }%
%

\def\IFRAME#1#2#3#4#5#6{%
      \bgroup
      \let\QCTOptA\empty
      \let\QCTOptB\empty
      \let\QCBOptA\empty
      \let\QCBOptB\empty
      #6%
      \parindent=0pt
      \leftskip=0pt
      \rightskip=0pt
      \setbox0=\hbox{\QCBOptA}%
      \@tempdima=#1\relax
      \ifOverFrame
          \typeout{This is not implemented yet}%
          \show\HELP
      \else
         \ifdim\wd0>\@tempdima
            \advance\@tempdima by \@tempdima
            \ifdim\wd0 >\@tempdima
               \setbox1 =\vbox{%
                  \unskip\hbox to \@tempdima{\hfill\GRAPHIC{#5}{#4}{#1}{#2}{#3}\hfill}%
                  \unskip\hbox to \@tempdima{\parbox[b]{\@tempdima}{\QCBOptA}}%
               }%
               \wd1=\@tempdima
            \else
               \textwidth=\wd0
               \setbox1 =\vbox{%
                 \noindent\hbox to \wd0{\hfill\GRAPHIC{#5}{#4}{#1}{#2}{#3}\hfill}\\%
                 \noindent\hbox{\QCBOptA}%
               }%
               \wd1=\wd0
            \fi
         \else
            \ifdim\wd0>0pt
              \hsize=\@tempdima
              \setbox1=\vbox{%
                \unskip\GRAPHIC{#5}{#4}{#1}{#2}{0pt}%
                \break
                \unskip\hbox to \@tempdima{\hfill \QCBOptA\hfill}%
              }%
              \wd1=\@tempdima
           \else
              \hsize=\@tempdima
              \setbox1=\vbox{%
                \unskip\GRAPHIC{#5}{#4}{#1}{#2}{0pt}%
              }%
              \wd1=\@tempdima
           \fi
         \fi
         \@tempdimb=\ht1
         \advance\@tempdimb by -#2
         \advance\@tempdimb by #3
         \leavevmode
         \raise -\@tempdimb \hbox{\box1}%
      \fi
      \egroup%
}%
%
\def\DFRAME#1#2#3#4#5{%
  \vspace\topsep
  \hfil\break
  \bgroup
     \leftskip\@flushglue
	 \rightskip\@flushglue
	 \parindent\z@
	 \parfillskip\z@skip
     \let\QCTOptA\empty
     \let\QCTOptB\empty
     \let\QCBOptA\empty
     \let\QCBOptB\empty
	 \vbox\bgroup
        \ifOverFrame 
           #5\QCTOptA\par
        \fi
        \GRAPHIC{#4}{#3}{#1}{#2}{\z@}%
        \ifUnderFrame 
           \break#5\QCBOptA
        \fi
	 \egroup
  \egroup
  \vspace\topsep
  \break
}%
%
\def\FFRAME#1#2#3#4#5#6#7{%
  \@ifundefined{floatstyle}
    {
     \begin{figure}[#1]%
    }
    {
	 \ifx#1h
      \begin{figure}[H]%
	 \else
      \begin{figure}[#1]%
	 \fi
	}
  \let\QCTOptA\empty
  \let\QCTOptB\empty
  \let\QCBOptA\empty
  \let\QCBOptB\empty
  \ifOverFrame
    #4
    \ifx\QCTOptA\empty
    \else
      \ifx\QCTOptB\empty
        \caption{\QCTOptA}%
      \else
        \caption[\QCTOptB]{\QCTOptA}%
      \fi
    \fi
    \ifUnderFrame\else
      \label{#5}%
    \fi
  \else
    \UnderFrametrue%
  \fi
  \begin{center}\GRAPHIC{#7}{#6}{#2}{#3}{\z@}\end{center}%
  \ifUnderFrame
    #4
    \ifx\QCBOptA\empty
      \caption{}%
    \else
      \ifx\QCBOptB\empty
        \caption{\QCBOptA}%
      \else
        \caption[\QCBOptB]{\QCBOptA}%
      \fi
    \fi
    \label{#5}%
  \fi
  \end{figure}%
 }%
%
%
%
%
%
\newcount\dispkind%

\def\makeactives{
  \catcode`\"=\active
  \catcode`\;=\active
  \catcode`\:=\active
  \catcode`\'=\active
  \catcode`\~=\active
}
\bgroup
   \makeactives
   \gdef\activesoff{%
      \def"{\string"}%
      \def;{\string;}%
      \def:{\string:}%
      \def'{\string'}%
      \def~{\string~}%
    }
\egroup

\def\FRAME#1#2#3#4#5#6#7#8{%
 \bgroup
 \ifnum\@msidraft=\@ne
   \wasdrafttrue
 \else
   \wasdraftfalse%
 \fi
 \def\LaTeXparams{}%
 \dispkind=\z@
 \def\LaTeXparams{}%
 \doFRAMEparams{#1}%
 \ifnum\dispkind=\z@\IFRAME{#2}{#3}{#4}{#7}{#8}{#5}\else
  \ifnum\dispkind=\@ne\DFRAME{#2}{#3}{#7}{#8}{#5}\else
   \ifnum\dispkind=\tw@
    \edef\@tempa{\noexpand\FFRAME{\LaTeXparams}}%
    \@tempa{#2}{#3}{#5}{#6}{#7}{#8}%
    \fi
   \fi
  \fi
  \ifwasdraft\@msidraft=1\else\@msidraft=0\fi{}%
  \egroup
 }%
%

\def\TEXUX#1{"texux"}

%
%
%
%
%
%
%
%
%
%

%
\long\def\QQQ#1#2{%
     \long\expandafter\def\csname#1\endcsname{#2}}%
\@ifundefined{QTP}{\def\QTP#1{}}{}
\@ifundefined{QEXCLUDE}{\def\QEXCLUDE#1{}}{}
\@ifundefined{Qlb}{}{}
\@ifundefined{Qlt}{}{}
\long\def\QQA#1#2{}%
\def\QTR#1#2{{\csname#1\endcsname {#2}}}%

%
%
\def\EXPAND#1[#2]#3{}%
\def\NOEXPAND#1[#2]#3{}%
\def\LaTeXparent#1{}%
\def\ChildStyles#1{}%
\def\ChildDefaults#1{}%
\def\QTagDef#1#2#3{}%

\@ifundefined{correctchoice}{}{}
\@ifundefined{HTML}{\def\HTML#1{\relax}}{}
\@ifundefined{TCIIcon}{\def\TCIIcon#1#2#3#4{\relax}}{}
\if@compatibility
  \typeout{Not defining UNICODE  U or CustomNote commands for LaTeX 2.09.}
\else
  \providecommand{\UNICODE}[2][]{\protect\rule{.1in}{.1in}}
  \providecommand{\U}[1]{\protect\rule{.1in}{.1in}}
  
\fi

\@ifundefined{lambdabar}{
      
   }{}

%
\@ifundefined{StyleEditBeginDoc}{}{}
%
\def\QQfnmark#1{\footnotemark}

%
%
\@ifundefined{TCIMAKEINDEX}{}{\makeindex}%
%
\@ifundefined{abstract}{%
 \def\abstract{%
  \if@twocolumn
   \section*{Abstract (Not appropriate in this style!)}%
   \else \small 
   \begin{center}{\bf Abstract\vspace{-.5em}\vspace{\z@}}\end{center}%
   \quotation 
   \fi
  }%
 }{%
 }%
\@ifundefined{endabstract}{\def\endabstract
  {\if@twocolumn\else\endquotation\fi}}{}%
\@ifundefined{maketitle}{\def\maketitle#1{}}{}%
\@ifundefined{affiliation}{\def\affiliation#1{}}{}%
\@ifundefined{proof}{}{}%
\@ifundefined{endproof}{}{}%
\@ifundefined{newfield}{\def\newfield#1#2{}}{}%
\@ifundefined{chapter}{\def\chapter#1{\par(Chapter head:)#1\par }%
 \newcount\c@chapter}{}%
\@ifundefined{part}{\def\part#1{\par(Part head:)#1\par }}{}%
\@ifundefined{section}{\def\section#1{\par(Section head:)#1\par }}{}%
\@ifundefined{subsection}{\def\subsection#1%
 {\par(Subsection head:)#1\par }}{}%
\@ifundefined{subsubsection}{\def\subsubsection#1%
 {\par(Subsubsection head:)#1\par }}{}%
\@ifundefined{paragraph}{\def\paragraph#1%
 {\par(Subsubsubsection head:)#1\par }}{}%
\@ifundefined{subparagraph}{\def\subparagraph#1%
 {\par(Subsubsubsubsection head:)#1\par }}{}%
\@ifundefined{therefore}{}{}%
\@ifundefined{backepsilon}{}{}%
\@ifundefined{yen}{}{}%
\@ifundefined{registered}{%
   \def\registered{\relax\ifmmode{}\r@gistered
                    \else$\m@th\r@gistered$\fi}%
 \def\r@gistered{^{\ooalign
  {\hfil\raise.07ex\hbox{$\scriptstyle\rm\text{R}$}\hfil\crcr
  \mathhexbox20D}}}}{}%
\@ifundefined{Eth}{}{}%
\@ifundefined{eth}{}{}%
\@ifundefined{Thorn}{}{}%
\@ifundefined{thorn}{}{}%
%
\@ifundefined{degree}{}{}%
%
\newdimen\theight
\@ifundefined{Column}{\def\Column{%
 \vadjust{\setbox\z@=\hbox{\scriptsize\quad\quad tcol}%
  \theight=\ht\z@\advance\theight by \dp\z@\advance\theight by \lineskip
  \kern -\theight \vbox to \theight{%
   \rightline{\rlap{\box\z@}}%
   \vss
   }%
  }%
 }}{}%
\@ifundefined{qed}{\def\qed{%
 \ifhmode\unskip\nobreak\fi\ifmmode\ifinner\else\hskip5\p@\fi\fi
 \hbox{\hskip5\p@\vrule width4\p@ height6\p@ depth1.5\p@\hskip\p@}%
 }}{}%
\@ifundefined{cents}{}{}%
\@ifundefined{tciLaplace}{}{}%
\@ifundefined{tciFourier}{}{}%
\@ifundefined{textcurrency}{}{}%
\@ifundefined{texteuro}{}{}%
\@ifundefined{euro}{}{}%
\@ifundefined{textfranc}{}{}%
\@ifundefined{textlira}{}{}%
\@ifundefined{textpeseta}{}{}%
\@ifundefined{miss}{\def\miss{\hbox{\vrule height2\p@ width 2\p@ depth\z@}}}{}%
\@ifundefined{vvert}{}{}
\@ifundefined{tcol}{\def\tcol#1{{\baselineskip=6\p@ \vcenter{#1}} \Column}}{}%
\@ifundefined{dB}{}{}
\@ifundefined{mB}{}{}
\@ifundefined{nB}{}{}
\@ifundefined{note}{}{}%
\def\newfmtname{LaTeX2e}
%
\ifx\fmtname\newfmtname
  \DeclareOldFontCommand{\rm}{\normalfont\rmfamily}{\mathrm}
  \DeclareOldFontCommand{\sf}{\normalfont\sffamily}{\mathsf}
  \DeclareOldFontCommand{\tt}{\normalfont\ttfamily}{\mathtt}
  \DeclareOldFontCommand{\bf}{\normalfont\bfseries}{\mathbf}
  \DeclareOldFontCommand{\it}{\normalfont\itshape}{\mathit}
  \DeclareOldFontCommand{\sl}{\normalfont\slshape}{\@nomath\sl}
  \DeclareOldFontCommand{\sc}{\normalfont\scshape}{\@nomath\sc}
\fi

%

\def\alpha{{\Greekmath 010B}}%
\def\beta{{\Greekmath 010C}}%
\def\gamma{{\Greekmath 010D}}%
\def\delta{{\Greekmath 010E}}%
\def\epsilon{{\Greekmath 010F}}%
\def\zeta{{\Greekmath 0110}}%
\def\eta{{\Greekmath 0111}}%
\def\theta{{\Greekmath 0112}}%
\def\iota{{\Greekmath 0113}}%
\def\kappa{{\Greekmath 0114}}%
\def\lambda{{\Greekmath 0115}}%
\def\mu{{\Greekmath 0116}}%
\def\nu{{\Greekmath 0117}}%
\def\xi{{\Greekmath 0118}}%
\def\pi{{\Greekmath 0119}}%
\def\rho{{\Greekmath 011A}}%
\def\sigma{{\Greekmath 011B}}%
\def\tau{{\Greekmath 011C}}%
\def\upsilon{{\Greekmath 011D}}%
\def\phi{{\Greekmath 011E}}%
\def\chi{{\Greekmath 011F}}%
\def\psi{{\Greekmath 0120}}%
\def\omega{{\Greekmath 0121}}%
\def\varepsilon{{\Greekmath 0122}}%
\def\vartheta{{\Greekmath 0123}}%
\def\varpi{{\Greekmath 0124}}%
\def\varrho{{\Greekmath 0125}}%
\def\varsigma{{\Greekmath 0126}}%
\def\varphi{{\Greekmath 0127}}%

\def\nabla{{\Greekmath 0272}}
\def\FindBoldGroup{%
   {\setbox0=\hbox{$\mathbf{x\global\edef\theboldgroup{\the\mathgroup}}$}}%
}

\def\Greekmath#1#2#3#4{%
    \if@compatibility
        \ifnum\mathgroup=\symbold
           \mathchoice{\mbox{\boldmath$\displaystyle\mathchar"#1#2#3#4$}}%
                      {\mbox{\boldmath$\textstyle\mathchar"#1#2#3#4$}}%
                      {\mbox{\boldmath$\scriptstyle\mathchar"#1#2#3#4$}}%
                      {\mbox{\boldmath$\scriptscriptstyle\mathchar"#1#2#3#4$}}%
        \else
           \mathchar"#1#2#3#4%
        \fi 
    \else 
        \FindBoldGroup
        \ifnum\mathgroup=\theboldgroup 
           \mathchoice{\mbox{\boldmath$\displaystyle\mathchar"#1#2#3#4$}}%
                      {\mbox{\boldmath$\textstyle\mathchar"#1#2#3#4$}}%
                      {\mbox{\boldmath$\scriptstyle\mathchar"#1#2#3#4$}}%
                      {\mbox{\boldmath$\scriptscriptstyle\mathchar"#1#2#3#4$}}%
        \else
           \mathchar"#1#2#3#4%
        \fi     	    
	  \fi}

\newif\ifGreekBold  \GreekBoldfalse
\let\SAVEPBF=\pbf
\def\pbf{\GreekBoldtrue\SAVEPBF}%

\@ifundefined{theorem}{\newtheorem{theorem}{Theorem}}{}
\@ifundefined{lemma}{\newtheorem{lemma}[theorem]{Lemma}}{}
\@ifundefined{corollary}{\newtheorem{corollary}[theorem]{Corollary}}{}
\@ifundefined{conjecture}{}{}
\@ifundefined{proposition}{\newtheorem{proposition}[theorem]{Proposition}}{}
\@ifundefined{axiom}{}{}
\@ifundefined{remark}{\newtheorem{remark}{Remark}}{}
\@ifundefined{example}{}{}
\@ifundefined{exercise}{}{}
\@ifundefined{definition}{}{}

\@ifundefined{mathletters}{%
  \newcounter{equationnumber}  
  \def\mathletters{%
     \addtocounter{equation}{1}
     \edef\@currentlabel{\theequation}%
     \setcounter{equationnumber}{\c@equation}
     \setcounter{equation}{0}%
     \edef\theequation{\@currentlabel\noexpand\alph{equation}}%
  }
  
}{}

\@ifundefined{BibTeX}{%
    \def\BibTeX{{\rm B\kern-.05em{\sc i\kern-.025em b}\kern-.08em
                 T\kern-.1667em\lower.7ex\hbox{E}\kern-.125emX}}}{}%
\@ifundefined{AmS}%
    {\def\AmS{{\protect\usefont{OMS}{cmsy}{m}{n}%
                A\kern-.1667em\lower.5ex\hbox{M}\kern-.125emS}}}{}%
\@ifundefined{AmSTeX}{}{}%
%

\def\@@eqncr{\let\@tempa\relax
    \ifcase\@eqcnt \def\@tempa{& & &}\or \def\@tempa{& &}%
      \else \def\@tempa{&}\fi
     \@tempa
     \if@eqnsw
        \iftag@
           \@taggnum
        \else
           \@eqnnum\stepcounter{equation}%
        \fi
     \fi
     \global\tag@false
     \global\@eqnswtrue
     \global\@eqcnt\z@\cr}

\def\TCItag{\@ifnextchar*{\@TCItagstar}{\@TCItag}}
\def\@TCItag#1{%
    \global\tag@true
    \global\def\@taggnum{(#1)}%
    \global\def\@currentlabel{#1}}
\def\@TCItagstar*#1{%
    \global\tag@true
    \global\def\@taggnum{#1}%
    \global\def\@currentlabel{#1}}
%
%
%
%
%
%
%
%
%
%
%
%
%
%
%
%
%
%
%

\def\tint{\msi@int\textstyle\int}%
\def\tiint{\msi@int\textstyle\iint}%
\def\tiiint{\msi@int\textstyle\iiint}%
\def\tiiiint{\msi@int\textstyle\iiiint}%
\def\tidotsint{\msi@int\textstyle\idotsint}%
\def\toint{\msi@int\textstyle\oint}%

%
%
%
%
%
%
%
%
%
%
%
%
%
%
%

\newtoks\temptoksa
\newtoks\temptoksb
\newtoks\temptoksc

\def\msi@int#1#2{%
 \def\@temp{{#1#2\the\temptoksc_{\the\temptoksa}^{\the\temptoksb}}}%
 \futurelet\@nextcs
 \@int
}

\def\@int{%
   \ifx\@nextcs\limits
      \typeout{Found limits}%
      \temptoksc={\limits}%
	  \let\@next\@intgobble%
   \else\ifx\@nextcs\nolimits
      \typeout{Found nolimits}%
      \temptoksc={\nolimits}%
	  \let\@next\@intgobble%
   \else
      \typeout{Did not find limits or no limits}%
      \temptoksc={}%
      \let\@next\msi@limits%
   \fi\fi
   \@next   
}%

\def\@intgobble#1{%
   \typeout{arg is #1}%
   \msi@limits
}

\def\msi@limits{%
   \temptoksa={}%
   \temptoksb={}%
   \@ifnextchar_{\@limitsa}{\@limitsb}%
}

\def\@limitsa_#1{%
   \temptoksa={#1}%
   \@ifnextchar^{\@limitsc}{\@temp}%
}

\def\@limitsb{%
   \@ifnextchar^{\@limitsc}{\@temp}%
}

\def\@limitsc^#1{%
   \temptoksb={#1}%
   \@ifnextchar_{\@limitsd}{\@temp}%
}

\def\@limitsd_#1{%
   \temptoksa={#1}%
   \@temp
}

\def\dint{\msi@int\displaystyle\int}%
\def\diint{\msi@int\displaystyle\iint}%
\def\diiint{\msi@int\displaystyle\iiint}%
\def\diiiint{\msi@int\displaystyle\iiiint}%
\def\didotsint{\msi@int\displaystyle\idotsint}%
\def\doint{\msi@int\displaystyle\oint}%

\if@compatibility\else
  \RequirePackage{amsmath}
\fi

\def\ExitTCILatex{\makeatother }

\bgroup
\ifx\ds@amstex\relax
   \message{amstex already loaded}\aftergroup\ExitTCILatex
\else
   \@ifpackageloaded{amsmath}%
      {\if@compatibility\message{amsmath already loaded}\fi\aftergroup\ExitTCILatex}
      {}
   \@ifpackageloaded{amstex}%
      {\if@compatibility\message{amstex already loaded}\fi\aftergroup\ExitTCILatex}
      {}
   \@ifpackageloaded{amsgen}%
      {\if@compatibility\message{amsgen already loaded}\fi\aftergroup\ExitTCILatex}
      {}
\fi
\egroup


\typeout{TCILATEX defining AMS-like constructs in LaTeX 2.09 COMPATIBILITY MODE}
%
%
\let\DOTSI\relax
\def\RIfM@{\relax\ifmmode}%
\def\FN@{\futurelet\next}%
\newcount\intno@
\def\iint{\DOTSI\intno@\tw@\FN@\ints@}%
\def\iiint{\DOTSI\intno@\thr@@\FN@\ints@}%
\def\iiiint{\DOTSI\intno@4 \FN@\ints@}%
\def\idotsint{\DOTSI\intno@\z@\FN@\ints@}%
\def\ints@{\findlimits@\ints@@}%
\newif\iflimtoken@
\newif\iflimits@
\def\findlimits@{\limtoken@true\ifx\next\limits\limits@true
 \else\ifx\next\nolimits\limits@false\else
 \limtoken@false\ifx\ilimits@\nolimits\limits@false\else
 \ifinner\limits@false\else\limits@true\fi\fi\fi\fi}%
\def\multint@{\int\ifnum\intno@=\z@\intdots@                          
 \else\intkern@\fi                                                    
 \ifnum\intno@>\tw@\int\intkern@\fi                                   
 \ifnum\intno@>\thr@@\int\intkern@\fi                                 
 \int}
\def\multintlimits@{\intop\ifnum\intno@=\z@\intdots@\else\intkern@\fi
 \ifnum\intno@>\tw@\intop\intkern@\fi
 \ifnum\intno@>\thr@@\intop\intkern@\fi\intop}%
\def\intic@{%
    \mathchoice{\hskip.5em}{\hskip.4em}{\hskip.4em}{\hskip.4em}}%
\def\negintic@{\mathchoice
 {\hskip-.5em}{\hskip-.4em}{\hskip-.4em}{\hskip-.4em}}%
\def\ints@@{\iflimtoken@                                              
 \def\ints@@@{\iflimits@\negintic@
   \mathop{\intic@\multintlimits@}\limits                             
  \else\multint@\nolimits\fi                                          
  \eat@}
 \else                                                                
 \def\ints@@@{\iflimits@\negintic@
  \mathop{\intic@\multintlimits@}\limits\else
  \multint@\nolimits\fi}\fi\ints@@@}%
\def\intkern@{\mathchoice{\!\!\!}{\!\!}{\!\!}{\!\!}}%
\def\plaincdots@{\mathinner{\cdotp\cdotp\cdotp}}%
\def\intdots@{\mathchoice{\plaincdots@}%
 {{\cdotp}\mkern1.5mu{\cdotp}\mkern1.5mu{\cdotp}}%
 {{\cdotp}\mkern1mu{\cdotp}\mkern1mu{\cdotp}}%
 {{\cdotp}\mkern1mu{\cdotp}\mkern1mu{\cdotp}}}%
%
%
%
\def\RIfM@{\relax\protect\ifmmode}
\def\text{\RIfM@\expandafter\text@\else\expandafter\mbox\fi}
\let\nfss@text\text
\def\text@#1{\mathchoice
   {\textdef@\displaystyle\f@size{#1}}%
   {\textdef@\textstyle\tf@size{\firstchoice@false #1}}%
   {\textdef@\textstyle\sf@size{\firstchoice@false #1}}%
   {\textdef@\textstyle \ssf@size{\firstchoice@false #1}}%
   \glb@settings}

\def\textdef@#1#2#3{\hbox{{%
                    \everymath{#1}%
                    \let\f@size#2\selectfont
                    #3}}}
\newif\iffirstchoice@
\firstchoice@true
%
%
\def\Let@{\relax\iffalse{\fi\let\\=\cr\iffalse}\fi}%
\def\vspace@{\def\vspace##1{\crcr\noalign{\vskip##1\relax}}}%
\def\multilimits@{\bgroup\vspace@\Let@
 \baselineskip\fontdimen10 \scriptfont\tw@
 \advance\baselineskip\fontdimen12 \scriptfont\tw@
 \lineskip\thr@@\fontdimen8 \scriptfont\thr@@
 \lineskiplimit\lineskip
 \vbox\bgroup\ialign\bgroup\hfil$\m@th\scriptstyle{##}$\hfil\crcr}%
\def\Sb{_\multilimits@}%
\def\endSb{\crcr\egroup\egroup\egroup}%
\def\Sp{^\multilimits@}%

%
%
%
\newdimen\ex@
\ex@.2326ex
\def\rightarrowfill@#1{$#1\m@th\mathord-\mkern-6mu\cleaders
 \hbox{$#1\mkern-2mu\mathord-\mkern-2mu$}\hfill
 \mkern-6mu\mathord\rightarrow$}%
\def\leftarrowfill@#1{$#1\m@th\mathord\leftarrow\mkern-6mu\cleaders
 \hbox{$#1\mkern-2mu\mathord-\mkern-2mu$}\hfill\mkern-6mu\mathord-$}%
\def\leftrightarrowfill@#1{$#1\m@th\mathord\leftarrow
\mkern-6mu\cleaders
 \hbox{$#1\mkern-2mu\mathord-\mkern-2mu$}\hfill
 \mkern-6mu\mathord\rightarrow$}%
\def\overrightarrow{\mathpalette\overrightarrow@}%
\def\overrightarrow@#1#2{\vbox{\ialign{##\crcr\rightarrowfill@#1\crcr
 \noalign{\kern-\ex@\nointerlineskip}$\m@th\hfil#1#2\hfil$\crcr}}}%

\def\overleftarrow{\mathpalette\overleftarrow@}%
\def\overleftarrow@#1#2{\vbox{\ialign{##\crcr\leftarrowfill@#1\crcr
 \noalign{\kern-\ex@\nointerlineskip}$\m@th\hfil#1#2\hfil$\crcr}}}%
\def\overleftrightarrow{\mathpalette\overleftrightarrow@}%
\def\overleftrightarrow@#1#2{\vbox{\ialign{##\crcr
   \leftrightarrowfill@#1\crcr
 \noalign{\kern-\ex@\nointerlineskip}$\m@th\hfil#1#2\hfil$\crcr}}}%
\def\underrightarrow{\mathpalette\underrightarrow@}%
\def\underrightarrow@#1#2{\vtop{\ialign{##\crcr$\m@th\hfil#1#2\hfil
  $\crcr\noalign{\nointerlineskip}\rightarrowfill@#1\crcr}}}%

\def\underleftarrow{\mathpalette\underleftarrow@}%
\def\underleftarrow@#1#2{\vtop{\ialign{##\crcr$\m@th\hfil#1#2\hfil
  $\crcr\noalign{\nointerlineskip}\leftarrowfill@#1\crcr}}}%
\def\underleftrightarrow{\mathpalette\underleftrightarrow@}%
\def\underleftrightarrow@#1#2{\vtop{\ialign{##\crcr$\m@th
  \hfil#1#2\hfil$\crcr
 \noalign{\nointerlineskip}\leftrightarrowfill@#1\crcr}}}%

\def\qopnamewl@#1{\mathop{\operator@font#1}\nlimits@}
\let\nlimits@\displaylimits
\def\setboxz@h{\setbox\z@\hbox}

\def\varlim@#1#2{\mathop{\vtop{\ialign{##\crcr
 \hfil$#1\m@th\operator@font lim$\hfil\crcr
 \noalign{\nointerlineskip}#2#1\crcr
 \noalign{\nointerlineskip\kern-\ex@}\crcr}}}}

 \def\rightarrowfill@#1{\m@th\setboxz@h{$#1-$}\ht\z@\z@
  $#1\copy\z@\mkern-6mu\cleaders
  \hbox{$#1\mkern-2mu\box\z@\mkern-2mu$}\hfill
  \mkern-6mu\mathord\rightarrow$}
\def\leftarrowfill@#1{\m@th\setboxz@h{$#1-$}\ht\z@\z@
  $#1\mathord\leftarrow\mkern-6mu\cleaders
  \hbox{$#1\mkern-2mu\copy\z@\mkern-2mu$}\hfill
  \mkern-6mu\box\z@$}

\def\projlim{\qopnamewl@{proj\,lim}}
\def\injlim{\qopnamewl@{inj\,lim}}
\def\varinjlim{\mathpalette\varlim@\rightarrowfill@}
\def\varprojlim{\mathpalette\varlim@\leftarrowfill@}
\def\varliminf{\mathpalette\varliminf@{}}
\def\varliminf@#1{\mathop{\underline{\vrule\@depth.2\ex@\@width\z@
   \hbox{$#1\m@th\operator@font lim$}}}}
\def\varlimsup{\mathpalette\varlimsup@{}}
\def\varlimsup@#1{\mathop{\overline
  {\hbox{$#1\m@th\operator@font lim$}}}}

%
%
%
%
%
%
\begingroup \catcode `|=0 \catcode `[= 1
\catcode`]=2 \catcode `\{=12 \catcode `\}=12
\catcode`\\=12 
|gdef|@alignverbatim#1\end{align}[#1|end[align]]
|gdef|@salignverbatim#1\end{align*}[#1|end[align*]]

|gdef|@alignatverbatim#1\end{alignat}[#1|end[alignat]]
|gdef|@salignatverbatim#1\end{alignat*}[#1|end[alignat*]]

|gdef|@xalignatverbatim#1\end{xalignat}[#1|end[xalignat]]
|gdef|@sxalignatverbatim#1\end{xalignat*}[#1|end[xalignat*]]

|gdef|@gatherverbatim#1\end{gather}[#1|end[gather]]
|gdef|@sgatherverbatim#1\end{gather*}[#1|end[gather*]]

|gdef|@gatherverbatim#1\end{gather}[#1|end[gather]]
|gdef|@sgatherverbatim#1\end{gather*}[#1|end[gather*]]

|gdef|@multilineverbatim#1\end{multiline}[#1|end[multiline]]
|gdef|@smultilineverbatim#1\end{multiline*}[#1|end[multiline*]]

|gdef|@arraxverbatim#1\end{arrax}[#1|end[arrax]]
|gdef|@sarraxverbatim#1\end{arrax*}[#1|end[arrax*]]

|gdef|@tabulaxverbatim#1\end{tabulax}[#1|end[tabulax]]
|gdef|@stabulaxverbatim#1\end{tabulax*}[#1|end[tabulax*]]

|endgroup

\def\align{\@verbatim \frenchspacing\@vobeyspaces \@alignverbatim
You are using the "align" environment in a style in which it is not defined.}

\@namedef{align*}{\@verbatim\@salignverbatim
You are using the "align*" environment in a style in which it is not defined.}
\expandafter\let\csname endalign*\endcsname =\endtrivlist

\def\alignat{\@verbatim \frenchspacing\@vobeyspaces \@alignatverbatim
You are using the "alignat" environment in a style in which it is not defined.}

\@namedef{alignat*}{\@verbatim\@salignatverbatim
You are using the "alignat*" environment in a style in which it is not defined.}
\expandafter\let\csname endalignat*\endcsname =\endtrivlist

\def\xalignat{\@verbatim \frenchspacing\@vobeyspaces \@xalignatverbatim
You are using the "xalignat" environment in a style in which it is not defined.}

\@namedef{xalignat*}{\@verbatim\@sxalignatverbatim
You are using the "xalignat*" environment in a style in which it is not defined.}
\expandafter\let\csname endxalignat*\endcsname =\endtrivlist

\def\gather{\@verbatim \frenchspacing\@vobeyspaces \@gatherverbatim
You are using the "gather" environment in a style in which it is not defined.}

\@namedef{gather*}{\@verbatim\@sgatherverbatim
You are using the "gather*" environment in a style in which it is not defined.}
\expandafter\let\csname endgather*\endcsname =\endtrivlist

\def\multiline{\@verbatim \frenchspacing\@vobeyspaces \@multilineverbatim
You are using the "multiline" environment in a style in which it is not defined.}

\@namedef{multiline*}{\@verbatim\@smultilineverbatim
You are using the "multiline*" environment in a style in which it is not defined.}
\expandafter\let\csname endmultiline*\endcsname =\endtrivlist

\def\arrax{\@verbatim \frenchspacing\@vobeyspaces \@arraxverbatim
You are using a type of "array" construct that is only allowed in AmS-LaTeX.}

\def\tabulax{\@verbatim \frenchspacing\@vobeyspaces \@tabulaxverbatim
You are using a type of "tabular" construct that is only allowed in AmS-LaTeX.}

\@namedef{arrax*}{\@verbatim\@sarraxverbatim
You are using a type of "array*" construct that is only allowed in AmS-LaTeX.}
\expandafter\let\csname endarrax*\endcsname =\endtrivlist

\@namedef{tabulax*}{\@verbatim\@stabulaxverbatim
You are using a type of "tabular*" construct that is only allowed in AmS-LaTeX.}
\expandafter\let\csname endtabulax*\endcsname =\endtrivlist


 \def\endequation{%
     \ifmmode\ifinner 
      \iftag@
        \addtocounter{equation}{-1} 
        $\hfil
           \displaywidth\linewidth\@taggnum\egroup \endtrivlist
        \global\tag@false
        \global\@ignoretrue   
      \else
        $\hfil
           \displaywidth\linewidth\@eqnnum\egroup \endtrivlist
        \global\tag@false
        \global\@ignoretrue 
      \fi
     \else   
      \iftag@
        \addtocounter{equation}{-1} 
        \eqno \hbox{\@taggnum}
        \global\tag@false%
        $$\global\@ignoretrue
      \else
        \eqno \hbox{\@eqnnum}
        $$\global\@ignoretrue
      \fi
     \fi\fi
 } 

 \newif\iftag@ \tag@false
 
 \def\TCItag{\@ifnextchar*{\@TCItagstar}{\@TCItag}}
 \def\@TCItag#1{%
     \global\tag@true
     \global\def\@taggnum{(#1)}%
     \global\def\@currentlabel{#1}}
 \def\@TCItagstar*#1{%
     \global\tag@true
     \global\def\@taggnum{#1}%
     \global\def\@currentlabel{#1}}

  \@ifundefined{tag}{
     \def\tag{\@ifnextchar*{\@tagstar}{\@tag}}
     \def\@tag#1{%
         \global\tag@true
         \global\def\@taggnum{(#1)}}
     \def\@tagstar*#1{%
         \global\tag@true
         \global\def\@taggnum{#1}}
  }{}

\def\binom#1#2{{#1 \choose #2}}%
%
%

\makeatother

\newcommand{\EE}{{\mathbb E}}

\newcommand{\NN}{{\mathbb N}}
\newcommand{\PP}{{\mathbb P}}
\newcommand{\RR}{{\mathbb R}}
\newcommand{\bP}{{\bf P}}
\newcommand{\mR}{{\cal{U}}}

\newcommand{\B}{{\cal B}}
\newcommand{\D}{{\cal D}}

\newcommand{\C}{{\cal C}}

\newcommand{\J}{{\cal J}}
\newcommand{\mH}{\cal{H}}
\newcommand{\hH}{{\widehat{\mH}}}

\newcommand{\aL}{{\cal L}}

\newcommand{\aS}{{\cal S}^{\downarrow}}
\newcommand{\aaS}{{\cal S}}

\newcommand{\V}{{\mathfrak{V}}}
\newcommand{\X}{{\cal X}}

\newcommand{\1}{{\bf 1}}

\newcommand{\mg}{\mathfrak{g}}

\newcommand{\mN}{\mathfrak{m}}
\newcommand{\omN}{\mathfrak{M}}

\newcommand{\opi}{\underline{\pi}}
\newcommand{\oPi}{\underline{\Pi}}
\newcommand{\hpi}{\widehat{\pi}}

\newcommand{\Gin}{\mathfrak{G}}
\newcommand{\oGin}{{\overline{\Gin}}}

\DeclareUnicodeCharacter{207A}{\ensuremath{^+}}  

\title[Notes]{Bayesian estimators of diversity indexes on exchangeable 
random partitions}

\author{Servet Martinez$^{1}$}
\thanks{{\it Corresponding author:} Servet Martinez, smartine@dim.uchile.cl}
\address{$^{1}$ Departamento de Ingenier\'{i}a Matem\'{a}tica \\
Centro Modelamiento Matem\'{a}tico\\
IRL 2807, UCHILE-CNRS \\
Casilla 170-3 Correo 3, Santiago, CHILE\\
E-mail: smartine@dim.uchile.cl}

\date{\today}

\begin{document}

\begin{abstract}
We study  indexes of diversity of the abundance of species when their proportions 
are organized as an exchangeable random partition and we take a sample from them. Firstly, we prove a general
result:  the sequence of posterior Bayesian estimators of any integrable function defined on countable partitions of
the unit interval is an integrable martingale that  converges a.s. and in $L^1$ to the function, when the 
sample size diverges to infinity. Hence, the posterior Bayesian estimator fluctuates as 
an integrable martingale. For the Poisson-Dirichlet Process, we study the estimators of the entropy and the 
Gini indexes in more detail.  A series of results are devoted to revealing
that the behavior of the Bayesian estimators share a number of similarities with the plug-in estimators. 
The first concerns its a.s. limit behavior, but we focus on other behaviors, expressing local relations between these estimators. 
This is the case for the one-step difference of the conditional plug-in entropy of the individuals
given that their species is known. We prove that it can be rephrased for the Bayesian  
entropy estimator and this gives a one-step difference between processes that does not jump 
only when a new species is found. Similar behavior is established for the Gini index.
\end{abstract}

\maketitle

\textbf{Running title}: Bayesian mean estimators on exchangeable random partitions.\newline

\bigskip

\noindent {\bf AMS Classification Number:} 62B10, 94A17

\bigskip 

\noindent {\bf Keywords:} Bayesian posterior estimators, 
exchangeable random partitions, Integrable martingales, 
Poisson-Dirichlet Process, Shannon entropy and Gini index.

\noindent{\bf Founding Institution:} ANID Basal PIA program FB210005 Center for Mathematical Modeling. 

\bigskip

\section{Introduction}
\label{sec1}

\smallskip

The study of the diversity of abundance of species  when the number of species and 
their abundance are unknown has been approached by using several models. For
instance, in \cite{chao2003} Shannon entropy is studied as an index of diversity and examined for classes
of data having unseen species in the sample and uses the Horvitz-Thompson adjustment
of missing species. In recent years, a number of works have taken the Poisson-Dirichlet process (PDP) 
as the prior distribution of the abundance of species.  These include 
\cite{buntine2012} in ecology, \cite{cereda} in forensic sciences, and \cite{favaro} and \cite{sharif2008}  
in machine learning. 
For this prior, the Bayesian posterior entropy  was studied in \cite{archer2012, archer2014}.
On the other, hand exchangeable random partitions of finite sets are considered in \cite{greve}
to study priors on the number and size of clusters.
 
\medskip

Under the assumption that the proportion of species 
are modeled by exchangeable random partitions we describe the behavior of  of the Bayesian posterior 
estimators of integrable functions on countable partitions of the unit interval. 
In Proposition \ref{prop1} we provide a martingale characterization of
the Bayesian estimator which serves to prove that it converge a.s. and in $L^1$.

\medskip

To be more precise: let the set $\aS$ of decreasing sequences of masses $s=(s(i): i\in \NN)$ 
of a partition of the unit interval be endowed with a probability distribution $P$. 
Consider a sequence of i.i.d. random variables uniformly distributed on $[0,1]$ 
(called observations or individuals) and let $\PP$ be their joint probability distribution.  
The observations are grouped following the classes of the partition of the interval, 
so a sample of $n$ individuals defines a partition of $\{1,..,n\}$. Let $k^n$ be
the number of classes it has and let $\pi^n(i)$ be the total number of individuals 
of the $i-$th class for $i=1,..,k^n$.  We denote by $q^n\sim \PP(\cdot | \pi^n(1),..,\pi^n(k^n))$  
the posterior distribution of the process given the number of individuals in the classes.

\medskip

In Proposition \ref{prop1} in Section \ref{sec3.1} we state one of our main results. 
Consider the filtration of $\sigma-$fields $\B_n=\sigma(\pi ^1,..,\pi^n)$, $n\in \NN$,  
and let $G:\aS\to \RR$ be a $P-$integrable function. We prove that
the sequence of Bayesian posterior means satisfies 
$E_{q^n}(G)=E_\PP(G| \B_n)$, so it is an integrable martingale, and 
\begin{equation}
\label{equ1}
\lim\limits _{n\to \infty} E_{q^n}(G)=G \hbox{ a.s. and in }L^1(\PP)
\end{equation}
holds. If $\EE(|G|^p)<\infty$ then the $L^p(\PP)$ convergence in (\ref{equ1}) also holds.

\medskip

We mention that for sum-type functions $G(s)=\sum_{n\in \NN} g(s(i))$,
the mean $E_P(G)$, and so the integrability   
is written in terms of the distribution of the first size-biased picking interval. 

\medskip

We are particularly interested in the convergence of some functions
which serve to measure the diversity of a population. We focus on the Shannon entropy $G=\mH$ and 
the Gini function $G=\Gin$ as indexes of diversity, see Section \ref{sec3.3}.  Both are of the sum-type.
Only at the end of this work, in Section \ref{sec5.4}, we make some comments on R\'enyi entropy.
For $\mH$ the integrability condition must be checked 
in order that the martingale property and (\ref{equ1}) hold. 
On the other hand, $\Gin$ is uniformly bounded so the Bayesian 
posterior estimator in (\ref{equ1}) converges for all $L^p(\PP)$, $p\ge 1$. 
We mention that in \cite{martinez2} we firstly proved the martingale property and (\ref{equ1}) 
for $G=H$.

\medskip

In Section \ref{sec2} we introduce some of the main notions for exchangeable partitions.
We closely follow the presentation in \cite{bertoin} Chapter $2$, only with some minor variations 
necessary for presenting and showing the martingale characterization. We 
use well-known time discrete-time martingale notions and properties that can be found 
in \cite{neveu}. 

\medskip

In Section \ref{sec4} we assume that  the law of the exchangeable partition is the PDP. In this
case the mean of the Shannon entropy 
$E_\bP(\mH)$ is finite. 
On the other hand  the posterior distribution $q_n$
is given in Corollary $20$ in \cite{pitman1996} and it serves as the keystone  
for computing the Bayesian posterior estimator of the entropy $E_{q_n}(\mH)$   
which was done in \cite{archer2012}. 
The second moment of the entropy and the Gini functions is computed in Section \ref{sec4.2}, 
to get an explicit bound on the second moment Doob inequality of the 
supremum of Bayesian posterior estimators. Since it is finite 
the martingale difference for the entropy and the Gini indexes 
cannot satisfy a Central Limit Theorem as in \cite{ibra}, even if the martingale difference is
uniformly bounded. For the entropy the uniform boundedness property is shown in 
Proposition \ref{prop2}. 

\medskip

In Section \ref{sec5} we seek to study the relations between the Bayesian posterior estimators and 
the plug-in estimators for exchangeable random partitions. This is done for the entropy and the Gini indexes
in two situations. The first consists of the limit behavior, which is studied in Section \ref{sec5.1}. When
$E_\bP(\mH)$ is finite, it follows directly from \cite{antos} that the plug-in entropy estimator 
$\hH_n=\mH(\pi^n/n)$ satisfies $\lim\limits_{n\to \infty} \hH_n=\mH\, \hbox{ a.s.}$. 
A similar relation holds for the Gini function. Thus, both the Bayesian posterior and the plug-in estimators converge 
a.s. to the entropy and the Gini indexes. In particular, this result extends relation
$|\hH_n-E_{q^n}(\mH)|\to 0$ in probability as $n\to \infty$, shown in \cite{archer2014} 
for the entropy in the PDP case, whose proof makes heavy use of the explicit expression of the Bayesian posterior mean.  

\medskip

Further, in Sections \ref{sec5.2} and \ref{sec5.3}, in the PDP frame, we state other kind of relations 
between  the Bayesian posterior and the plug-in estimators related to
the evolution in one step of time and the maxmin properties of these estimators at some fixed time.
For instance,  with the plug-in estimator, one computes the one-step difference of the conditional entropy 
of individuals given that their species are known. This quantity increases in time and it remains constant 
only when a new species is found. This phenomenon is retrieved for the Bayesian posterior estimators 
of the entropy and Gini indexes, providing a one-step difference between increasing adapted 
processes that remains constant only at the times of new-species discovery, see Proposition \ref{prop4}. 
This result is summarized in Corollary \ref{cor1}.
These properties deepen and enlarge previous results done in \cite{martinez} for 
the entropy function. 

\medskip

From this presentation it should be clear that the references \cite{pitman1996, archer2012, archer2014}
and \cite{martinez}, form the basis of our results in Section \ref{sec4}, and Sections \ref{sec5.2} and \ref{sec5.3}, 
respectively.  

\smallskip

\section{Exchangeable partitions}
\label{sec2}

\smallskip

A random partition $\Xi$ of the set of integers $\NN=\{1,2,...\}$ is exchangeable if its
law is invariant under the class of permutations 
$\V=\{\varphi:\NN\to \NN : \exists N(\varphi),  \varphi(i)=i \hbox{ for } i>N(\varphi) \}$. The invariance in law 
means $\varphi(\Xi)\sim \Xi$ $\; \forall \varphi\in \V$, where the classes of $\varphi(\Xi)$ are described by: 
the class of  $\varphi(\Xi)$ containing $i\in \NN$ is $\varphi(\Xi)(i)=\varphi^{-1}(\Xi(i))$. 
The Kingman's theory describe this class of partitions by using random partitions of 
the unit interval $[0,1]$. Let us give a brief presentation of it.

\medskip

Let $\aS=\{s=(s(i): i\in \NN): s(i\!+\!1)\ge s(i)\!\ge \!0\;  \forall i\in \NN, \, \sum_{i\in \NN}s(i)=1\}$ be the 
class of probability sequences ordered in a decreasing way, be endowed  with a probability measure
$P$. Let $S$ be a random sequence with values in $\aS$ and distributed 
according to  $P$, this is denoted by $S\sim P$. 
The mean expected value with respect to $P$ is denoted by $E_P$.

\medskip

To each $s\in \aS$ one associates a collection of disjoint open intervals 
$\J^s=(J^s(i): i\in \NN)$ of $[0,1]$ such that $s=(|J^s(i)|^{\downarrow}: i\in \NN )$ is the sequence of  
interval lengths ranked in a decreasing way. 
Then, $\J^S$ is a collection of open intervals associated to $S$, called  the classes of $S$.

\medskip

Let $\X=(X_l: l\ge 1)$ be a sequence of i.i.d. Uniform r.v's in $[0,1]$ independent of $S$. Its law is noted $\mR$
and it is invariant under $\V$, that is  $\varphi(X)\sim \mR$ for all $\varphi\in \V$, where
$\varphi(\X)=(X_{\varphi(i)}: i\in \NN)$. The random elements $S,\X$ are independent so its joint 
law  is the product probability $\PP=P\otimes \mR$ .
For a random element $Y$ depending on $(S, \X)$, $\PP(\cdot | Y=y)$ denotes
the conditional distribution given $Y=y$. 

\medskip

Consider the sequence of species $\X^*=\X^*(S,\X)=(X^*_n: n\ge 1)$ 
that is recursively defined by: 
$X^*_1=1$ and if, up to $n$, $\{X_1,...,X_n\}$ have visited $k^n$ different intervals in $\J^S$, then 
$X^*_{n+1}\!=\!j$ and $k^{n+1}\!=\!k^n$ if   
$X_{n+1}$ is in the same interval as $X^*_j$ for some $j\!\in \!\{1,..,k^n\}$; and
$X^*_{n+1}\!=\!k^n\!+\!1$ and $k^{n+1}\!=\!k^n\!+\!\1$, if 
$X_{n+1}$ belongs to an interval that has not been visited before $n$.
So, the classes are numbered in $\NN$ sequentially as  they are encountered. 
Most of the notions will only depend on the probability law of the pair $(S,\X^*)$, which is denoted 
by $\bP$. 

\medskip

Let $\xi^n=(\xi^n(1),..,\xi^n(k^n))$ be the partition of $\{1,..,n\}$ with classes 
$\xi^n(i)=\{j\in \{1,..,n\}: X^*_j=i\}$. 
The sequence of partitions $(\xi^n: n\in \NN)$ is compatible, this means $\xi^{n+1}|_{\{1,..,n\}}=\xi_n \; \forall n$, 
and it defines an exchangeable random partition $\Xi=(\xi(i): i\in \NN)$ of $\NN$. 
Let $\pi^n(j)=\# \xi^n(j)$ be the number
of elements of $\xi^n(j)$ and define the vector of abundance of the species, $\pi^n=(\pi^n(1),..,\pi^n(k^n))$.
One has $k_1=1$, $\pi^1=(1)$ and 
$$
\pi^{n+1}= 
\begin{cases}
&\!\! \pi^n\!+\delta_{k^n}(j) \hbox{ if }X^*_{n+1}=j \hbox{  for some } j\in \{1,..,k^n\},  \\
&\!\! (\pi^n,1) \hbox{ if } X_{n+1}\not\in \{X^*_1,..,X^*_n\};
\end{cases}
$$
where $\delta_k(j)$ is the $k-$dimensional vector with all $0'$s except by a $1$ in position $j$.
Consider the sequence $\Pi=(\pi^n: n\in \NN)$. Notice that given $S$, the information of $\X^*$ and
$\Pi$ determine each other, because $\X^*$ defines $\Pi$, and reciprocally  the passage from 
$\pi^n$ to $\pi^{n+1}$ determines the value of $X^*_n$. Then, instead of the pair $(S,\X^*)$ we often consider 
$(S,\Pi)$. So, $\bP$ is the distribution of the pair  $(S,\Pi)$ and $\bP(\cdot | s)$ denotes 
 the conditional distribution given $S=s$. 
 
\medskip

In Kingman theory it is shown there exist the asymptotic frequencies 
$\hpi(i)=\lim\limits_{n\to \infty} \pi^n/n$ $\,\bP( \cdot | s)-$a.s., 
and the vector $\hpi=(\hpi(i): i\in \NN)$ is a size-biased 
reordering of $s$, see \cite{kingman} and also \cite{pitman1995}. 
Then, the decreasing ranked sequence $\hpi^{\downarrow}$ is distributed with
law $P$ in $\aS$. Moreover, the  law of any exchangeable random partition $\Xi$ 
can be set in the form $d\bP(s,\Pi)=\bP(\Pi |s) dP(s)$, see Theorem $2.1$ in \cite{bertoin}.

\medskip

As seen, the construction of exchangeable partitions depends on the law $P$ on $\aS$.
The law can be defined by an inhomogeneous Poisson 
process on $(0,\infty)$ having a.s. a finite 
number of points in $(1,\infty)$ and an infinite number of points in $(0,1)$. 
See Section 2.2. in \cite{bertoin} and Section $3$ in \cite{pitman2003}. 
Meaningful examples of laws $P$ are given in Section $5$ of  \cite{pitman2003}. 

\smallskip

\section{Martingale characterization}
\label{sec3}

\smallskip

Let $\Pi_n=(\pi^r: r\le n)$, so $d\bP(s,\Pi_n)=\bP(\Pi_n | s) dP(s)=\bP(ds | \Pi_n)\bP(\Pi_n)$ 
where  $\bP(\cdot | \Pi_n)$ is the conditional distribution given $\Pi_n$. 
We also note $\bP(\cdot | \pi_n)$ the conditional distribution given $\pi_n$. 

\medskip

By $\Pi$ we also mean a value taken by $\Pi(s,\X)$, so $\Pi_n$ is the
vector containing the first $n$ coordinates of $\Pi$ and $\pi^n$ is its $n-$th coordinate. 
We shall note by $\omN$ the set of all values $\Pi$, by  $\omN_n$ the set of values 
$\Pi_n$ and by  $\mN_n$ be the set of all values $\pi^n$.

\medskip

Let $\aaS=\{s=(s(i): i\! \in \NN): s(i)\!\ge 0\;  \forall i\!\in \NN, \sum_{i\in \NN}s(i)=1\}$ and take
a function $G:\aaS\to \RR$ symmetric in the order of the components $(s(i): i\in \NN)$.
Thus, $G(s)=G(s')$ where $s'\in \aS$ is the sequence of the components of
$s$ ranked in a decreasing way. 

\smallskip

\subsection{Bayes posterior estimators convergence}
\label{sec3.1}

\smallskip

Let $\B_n=\sigma(\Pi_n)$  be the $\sigma-$field generated by the random element $\Pi_n$, 
so $\B_n$  increases with $n\in \NN$. 

\begin{proposition}
\label{prop1}
Let $G:\aS\to \RR$ and assume $E_P(|G|)<\infty$. Let $E_{\bP(\cdot| \pi^n)}(G)$ be 
the Bayes posterior mean of  $G$ at step $n$. One has,
\begin{equation}
\label{equ2}
\forall n\in \NN: \; E_{\bP(\cdot| \pi_n)}(G)=E_\bP(G|\B_n)(\Pi),
\end{equation}
and $\left(E_{\bP(\cdot| \pi_n)}(G): n\in \NN\right)$ is an integrable $\bP-$martingale with 
respect to the filtration $(\B_n: n\!\in \!\NN)$. We have
\begin{equation}
\label{equ3}
\lim\limits_{n\to \infty}E_{\bP(\cdot| \pi^n)}(G)=G \; \; \bP\!-\!\hbox{a.s. and in } L^1(\bP),
\end{equation}
and if $E_P(|G|^p)<\infty$ for some $p>1$, then the limit holds in $L^p(\bP)$.
\end{proposition}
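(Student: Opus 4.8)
The plan is to recognize the sequence $\bigl(E_{\bP(\cdot|\pi^n)}(G)\bigr)_{n}$ as the Doob (Lévy) martingale of the fixed integrable variable $G(S)$ relative to the increasing filtration $(\B_n)$, and then to deduce (\ref{equ3}) from Lévy's upward convergence theorem; throughout, the symbol $G$ on the right-hand side of (\ref{equ3}) is read as the random variable $G(S)$. The argument splits into two structural inputs, after which the remaining claims are automatic. The first input is the explicit representation (\ref{equ2}), identifying the abstract conditional expectation $E_\bP(G\,|\,\B_n)$ with the posterior mean given only the abundance vector $\pi^n$. The second is that $G(S)$ is measurable with respect to the tail $\sigma$-field $\B_\infty:=\bigvee_n \B_n=\sigma(\Pi)$. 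Granting both, integrability and the martingale property hold at once, because a conditional expectation of an $L^1$ variable along an increasing filtration is an $L^1$ martingale and $E_\bP(|G(S)|)=E_P(|G|)<\infty$ since $S\sim P$.

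To establish (\ref{equ2}) I would use a sufficiency argument based on the paintbox construction of Section~\ref{sec2}. As noted there, $\Pi_n$ and the label sequence $(X^*_1,\dots,X^*_n)$ determine one another, so $\B_n=\sigma(\Pi_n)$ is exactly the $\sigma$-field of the labeled partition $\xi^n$ of $\{1,\dots,n\}$. Conditionally on $S=s$, the probability of a prescribed history is computed by assigning the $k^n$ discovered blocks injectively to distinct intervals of $\J^s$ and summing the resulting products,
\[
\bP(\Pi_n\,|\,s)=\sum_{\iota\ \mathrm{injective}}\ \prod_{m=1}^{k^n}s\bigl(\iota(m)\bigr)^{\pi^n(m)},
\]
the first-appearance labeling of $\xi^n$ making every such injection admissible. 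This sum is invariant under permutations of the exponents $\pi^n(1),\dots,\pi^n(k^n)$, so it depends on the history $\Pi_n$ only through $\pi^n$, and the same then holds for the marginal $\bP(\Pi_n)$ obtained by integrating in $s$. Hence the posterior laws coincide, $\bP(ds\,|\,\Pi_n)=\bP(ds\,|\,\pi^n)$, and integrating $G$ yields
\[
E_\bP(G\,|\,\B_n)(\Pi)=\int_{\aS}G(s)\,\bP(ds\,|\,\Pi_n)=\int_{\aS}G(s)\,\bP(ds\,|\,\pi^n)=E_{\bP(\cdot|\pi^n)}(G).
\]

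For the passage to the limit I would invoke Kingman's asymptotic-frequency theorem recalled in Section~\ref{sec2}: $\bP$-a.s.\ the frequencies $\hpi(i)=\lim_n \pi^n(i)/n$ exist and $\hpi^{\downarrow}=S$. Thus $S$ is a $\bP$-a.s.\ limit of $\Pi$-measurable quantities, so $G(S)$ is $\B_\infty$-measurable and $E_\bP(G\,|\,\B_\infty)=G(S)$. Lévy's upward theorem (see \cite{neveu}) then gives $E_\bP(G\,|\,\B_n)\to G(S)$ both $\bP$-a.s.\ and in $L^1(\bP)$, which is (\ref{equ3}) once (\ref{equ2}) is substituted. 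If moreover $E_P(|G|^p)=E_\bP(|G(S)|^p)<\infty$ for some $p>1$, then Doob's $L^p$ maximal inequality shows $\sup_n|E_\bP(G\,|\,\B_n)|\in L^p(\bP)$; hence $\bigl(|E_\bP(G\,|\,\B_n)|^p\bigr)_n$ is uniformly integrable, and together with the a.s.\ convergence this upgrades the limit to $L^p(\bP)$.

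The main obstacle is the sufficiency step behind (\ref{equ2}): one must verify carefully that the full temporal record $\Pi_n$ carries no more information about $S$ than the unordered counts $\pi^n$, i.e.\ that the paintbox likelihood is a symmetric function of the block sizes. Once that collapse is justified, the martingale property and all three modes of convergence are routine consequences of the theorems of Lévy and Doob, so this is the step I would write out in full detail.
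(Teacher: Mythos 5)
Your proof is correct, and its global architecture is the same as the paper's: realize the posterior means as the Doob martingale $E_\bP(G\,|\,\B_n)$, identify the closing variable through Kingman's asymptotic frequencies, and finish with the martingale convergence theorems (where the paper simply cites Neveu for the $L^1$ and $L^p$ statements, you argue via Doob's maximal inequality and uniform integrability --- both standard). The genuine difference is how you prove the sufficiency step behind (\ref{equ2}). You compute the paintbox likelihood explicitly, $\bP(\Pi_n\,|\,s)=\sum_{\iota\,\mathrm{injective}}\prod_{m=1}^{k^n}s(\iota(m))^{\pi^n(m)}$, and read off that it is symmetric in the block sizes, hence a function of $\pi^n$ alone; Bayes' rule (legitimate here since $\Pi_n$ is discrete) then collapses $\bP(ds\,|\,\Pi_n)$ to $\bP(ds\,|\,\pi^n)$ --- though you should spell out the last step, since a priori $\bP(ds\,|\,\pi^n)$ is a mixture of the conditional laws given the several histories ending in $\pi^n$, and one must observe these are all equal (or invoke the tower property). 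The paper never writes the likelihood: it proves the stronger identity (\ref{equ4}), $\bP(ds,\Pi_{n+1}\,|\,\Pi_n)=\bP(ds,\Pi_{n+1}\,|\,\pi^n)$, by exhibiting a finite permutation of individuals carrying any history into any other with the same $\pi^n$ and invoking the $\V$-invariance of the law of $\X$, and then passes through the disintegration identity (\ref{equ5}). What each approach buys: your EPPF computation is concrete, self-contained, and makes the sufficiency of $\pi^n$ visible at a glance; the paper's permutation argument needs no formula and, because it keeps $\Pi_{n+1}$ inside the conditioning, is directly reusable in Section \ref{sec3.2} to obtain the Markov property (\ref{equ6}) of the chain $(\pi^n)$, which your weaker statement does not directly give. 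One last cosmetic point: since $S=\hpi^{\downarrow}$ holds only $\bP$-a.s., the identification $E_\bP(G\,|\,\B_\infty)=G(S)$ requires $\B_\infty$ to be $\bP$-completed (or an ``a.s.\ equal to a $\B_\infty$-measurable variable'' remark), as the paper notes.
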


\begin{proof}
Firstly, let us check the following equality of conditional laws,
\begin{equation}
\label{equ4}
\bP(ds, \Pi_{n+1} | \Pi_n)=\bP(ds, \Pi_{n+1} | \pi^n). 
\end{equation}

We note $\X_n=(X_1,..,X_n)$. Take $\opi^n\in \mN_n$ and let
$\omN_n(\opi^n)\!\!=\!\!\{\Pi_n\!\in \! \omN_n\!: \!\pi^n\!\!=\!\!\opi^n\}$. 
For $\Pi_n^1$, $\Pi_n^2\in \omN_n(\opi^n)$ 
there is a permutation $\varphi_n$ of $\{1,..,n\}$ 
satisfying $\{\Pi_n(s,\X_n)=\Pi_n^1\}=\{\Pi_n(s,\varphi_n(\X_n))=\Pi_n^2\}$. 
Since the law of $\X$ is $\V-$invariant one has, 
$$
\PP(\Pi_{n+1},\! \Pi_n(\!s,\! \X_n)\!\!=\!\Pi_n^1| s)
\!=\! \PP(\Pi_{n+1}, \! \Pi_n(\!s,\! \X_n))\!=\!\Pi_n^2|s).
$$
From $\PP(ds, \! \Pi_{n+1},\! \Pi_n(\!s,\! \X_n)\!\!=\!\Pi_n^i)
=\PP(\Pi_{n+1},\! \Pi_n(\!s,\! \X_n)\!\!=\!\Pi_n^i| s)P(ds)$ for $i=1,2$ one gets
$\PP(ds, \Pi_{n+1} |\Pi_n(s,\X_n)=\Pi_n^1)=\PP(ds, \Pi_{n+1} | \Pi_n(s,\X_n)=\Pi_n^2)$ 
and  (\ref{equ4}) follows.

\medskip
 
Since $d\bP(s,\Pi_n)=\bP(ds | \Pi_n)\bP(\Pi_n)$, every measurable function $\mg_n:\aS \times {\omN}_n\to \RR$ 
that is nonnegative or  ${\bf P}-$integrable,  satisfies
$$
\int\limits_{\aS}\! \sum\limits_{\Pi_n\in \omN_n}  \!\!\!\! \mg_n(s,\Pi_n) 
\bP(\Pi_n | s) dP(s)=\!\int \!   \! \mg_n d\bP\!=\!
\sum\limits_{\Pi_n\in \omN_n}\int\limits_{\aS} \!\! \mg_n(s,\Pi_n) \bP(ds | \Pi_n) \bP(\Pi_n).
$$
(See Lemma $1$ in \cite{ishwaran2003}). 
Now, (\ref{equ4}) gives $\bP(ds | \Pi_n)\bP(\Pi_n)=\bP(ds | \pi^n)\bP(\Pi_n)$, so we get
\begin{equation}
\label{equ5}
\int\limits_{\aS}\! \sum\limits_{\Pi_n\in \omN_n}  \!\!\!\! \mg_n(s,\Pi_n) 
\bP(\Pi_n | s) dP(s)\!=\!\int \!   \! \mg_n d\bP\!=\!\!\!\!
\sum\limits_{\Pi_n\in \omN_n} \int\limits_{\aS} \!\! \mg_n(s,\Pi_n) \bP(ds | \pi^n) \bP(\Pi_n).
\end{equation}

Fix some value $\oPi_n=(\opi^j: j=1,..,n)\in \omN_n$ and 
take the integrable function $\mg_n(s,\Pi_n)= G(s) {\bf 1}(\Pi_n(s,\X_n)=\oPi_n)$ in formula (\ref{equ5}). Then,
$$
\int_{\aS} G(s) \bP(\oPi_n | s)P(ds)
=\bP(\oPi_n)\int_{\aS} G(s) \bP(ds | \opi^n)=\bP(\oPi_n) E_{\bP(\cdot| \opi^n)}(G).
$$
Since 
$$
\int_{\aS} G(s) \bP(\oPi_n | s)P(ds)=\int_{\aS} G(s) E({\bf 1}_{\Pi_n(s,\X_n)=\oPi_n} | s)P(ds)
=\int_{\{\Pi_n(s,\X_n)=\oPi_n\}} \! G \, d\bP ,
$$
we get
$$
E_{\bP(\cdot| \opi^n)}(G)=\frac{1}{\bP(\Pi_n(s,\X_n)=\oPi_n)}\int_{\{\Pi_n(s,\X_n)=\oPi_n\}} G d\bP=
E_\bP(G|\B_n))(\oPi_n),
$$
which shows (\ref{equ2}):
$\left(E_{\bP(\cdot| \pi_n)}(G): n\in \NN\right)$ is an integrable $\bP-$martingale with 
respect to the filtration $(\B_n: n\!\in \!\NN)$.
The limit $\sigma-$field is $\B_\infty=\sigma(\Pi)$ $\; \bP-$completed. 
We have $\lim\limits_{n\to \infty} E_{\bP(\cdot| \pi_n)}(G)=G$ $\, \bP-$a.s. 
because the $\sigma(\Pi)-$measurable sequence $\hpi=\lim\limits_{n\to \infty }\pi^n/n$ 
is a size-biased reordering of $s$, $\; \bP(\cdot | s)$ a.s.,
and since $G$ is symmetric in its components one gets $G(\hpi)=G(s)$ $\; \bP(\cdot | s)$ a.s. 
Then, the $\bP-$a.s. convergence in (\ref{equ3}) is satisfied.

\medskip

The martingale theorem for integrable martingales 
gives the $L^1(\bP)$ convergence in (\ref{equ3}) and when  $E_P(|G|^p)<\infty$ for some 
$p>1$, then it follows the $L^p(\bP)$ convergence in (\ref{equ3}) (see Proposition 
$II\!-\!2\!- \!11$ in \cite{neveu}). So, (\ref{equ3}) is satisfied. 
\end{proof}

\subsection{Comments derived from the martingale property}
\label{sec3.2}

Since $\pi^1$ is constant, $\B_1=\sigma(\pi^1)$ is trivial and $E_{\bP(\cdot|\pi^1)}(G)=E_\bP(G)$.
On the other hand $\bP(\Pi_1=(1))=\bP(\pi^1=1)=1$, and from  (\ref{equ4}), 
$\bP(\Pi_{n+1} | \Pi_n)=\bP(\pi^{n+1}| \pi^n)$. Then, 
\begin{equation}
\label{equ6}
\bP(\Pi_n)=\prod_{k=1}^{n-1} \bP(\pi^{k+1}| \pi^{k}).
\end{equation}

\medskip

Let $\pi^n\in \mN_n$. Then,  
$\C(\pi^n)=\{\pi^n+\delta_{k^n}(j): j\!=\!1,..,k^n\}\cup  \{(\pi^n,1)\}$
is the set of the values that can take $\pi^{n+1}$. Since 
$E({\bf 1}_{\pi^{n+1}}|\B_n)(\pi^n)=\bP(\pi^{n+1} | \pi^n)$, 
the martingale property  
$E_{\bP(\cdot| \pi^{n})}(G)=E(\,E(G|\B_{n+1})\, |\B_n)(\pi^n)$ gives,
$$
\forall n\in \NN:\; E_{\bP(\cdot| \pi^{n})}(G)=\sum_{\pi^{n+1}\in \C(\pi^n)} 
E_{\bP(\cdot| \pi^{n+1})}(G)\bP(\pi^{n+1}|\pi^n).
$$
If $V_{n+1}$ as a $\B_{n+1}$-measurable function and $h$ a nonnegative Borel function then,
\begin{equation}
\label{equ7}
E(h(V_{n+1})|\B_n)(\pi^n)=\sum_{\pi^{n+1}\in \C(\pi^n)} 
h(V_{n+1}(\pi^{n+1})) \bP(\pi^{n+1}|\pi^n).
\end{equation}

\smallskip

If $G\in L^2(P)$ the Doob maximal inequality (see Proposition IV-2-8 in \cite{neveu}) applied to 
the martingale $(E_{\bP(\cdot |\pi^n)}(|G|: n\in \NN)$ gives,
\begin{equation}
\label{equ8}
E_{\bP}\left(\left(\sup\limits_{n\in \NN} E_{\bP(\cdot |\pi^n)}(|G|)\right)^2\right)
\le 4 \sup\limits_{n\in \NN}E_\bP\left(E_\bP\left(|G| \, | \B_n\right)^2\right)\le 4 E_\bP\left(|G|^2\right).
\end{equation}

\medskip

Finally, notice that one can randomize the Bayes posterior mean.
For instance take the sequence of times when new species appear, 
$$
\tau^1=1, \; \tau^{n+1}=\inf\{t> \tau^n: \pi^{t+1}=(\pi^t,1)\}, n\ge 1,
$$
which are finite a.s. stopping times and $\PP(\lim\limits_{n\to \infty}\tau^n=\infty)=1$. 
From (\ref{equ2}),
$$
E_\bP(G|\B_{\tau_n})(\pi^{\tau_n})=\sum_{r\in \NN}  \1_{\{\tau_n=r\}} E(G|\B_r)(\pi^r)=E_{\bP(\cdot |\pi^{\tau_n})}(G).
$$
So, $(E_{\bP(\cdot |\pi^{\tau_n})}(G): n\in \NN)$ is an integrable martingale with respect to
the filtration of $\sigma-$fields $(\B_{\tau^n}: n\in \NN)$ and it 
converges a.s. and in $L^1(\bf P)$, see Corollary $IV\!-\!2\!-\!6$ in \cite{neveu}.

\medskip

\subsection{The entropy and the Gini indexes}
\label{sec3.3}

We are interested on functions $G$ which serve to measure the degree of 
mixture of proportions of the species in some population. In particular we will focus on: 

\smallskip

The Shannon entropy $\mH(s) = - \sum_{i\in \NN}s(i) \log(s(i))$, introduced and studied in \cite{shannon} and
the Gini function $\Gin(s)=1-\sum_{i\in \NN} s(i)^{2}$, that is the probability that 
two independent classes, both chosen with distribution $s$, are different.
It was introduced in \cite{gini} to study the diversity of groups in a population. 
We also consider $\Gin^{(\kappa)}(s)=1-\sum_{i\in \NN} s(i)^{\kappa+1}=\sum_{i\in \NN}s(i)(1-s(i)^\kappa)$ 
the generalized Gini function of parameter $\kappa>0$. When $\kappa\in \NN$ it is
the probability that $\kappa+1$ independent classes chosen with law $s$, 
are not all the same.

\medskip

The generalized Gini function and the Shannon entropy 
satisfy  the hypotheses of what is called an impurity function defined on the sequences $s\in \aaS$ 
having a finite set of non-vanishing masses (see Definition $2.5$ and Proposition $A.1$ in \cite{breiman}).
On the set of $s\in \aaS$ having at most $n$ non-vanishing masses these hypotheses are: 
being nonnegative and symmetric in the components; vanishing at $s=\delta_n(i)$; 
reaching its maximum at $(1/n,..,1/n)$; 
and being concave, so the function of a mixture of sequences of masses is greater or equal
than the mixture of the functions of these sequences.

\medskip

The indexes $\mH$ and $\Gin^{(\kappa)}$ are sum-type functions, that is of the form 
$G(s)=\sum_{i\in \NN} g(s(i))$ with $g:[0,1]\to \RR_+$ a Borel function. For this type of 
functions it holds,
$$
E_P\left(\sum_{i\in \NN} g(S(i))\right)=\int_0^1 \frac{g(x)}{x} dF(x)
\hbox{ where }\hpi(1)\sim F,
$$ 
that is $F$ is the distribution of the first size-biased picking interval. 
See relation ($25$) in \cite{pitman2-1996}. 
For $G=\mH$ one has $E_P(\mH)=-\int_0^1 \log(x)dF(x)$,
so $E_P(\mH)\!< \!\infty$ is equivalent to $\int_0^1 {\log}(x)dF(x)\!> \!-\infty$ and
when this holds we can apply Proposition \ref{prop1} to $\mH$.
On the other hand, we have 
$E_P(\Gin^{(\kappa)})=1-\int_0^1 x^\kappa dF(x)$. Moreover, since $\Gin^{(\kappa)}$ 
is bounded, $E_P((\Gin^{(\kappa)})^p)$ is finite $\forall p\ge 1$ so in  
Proposition \ref{prop1} the $L^p(\bP)$ convergence is satisfied.

\medskip

 \section{Poisson Dirichlet Process}
\label{sec4}

\smallskip

An important class of exchangeable partitions is given 
by the two parameter PPD, denoted  PDP$(\alpha,\theta)$, 
with $0\leq \alpha< 1$ and $\theta>-\alpha$, 
introduced in \cite{pitman1997}. When it is necessary we denote its law by ${\bf P}= 
{\bf P}^{\alpha,\theta}$.
In  \cite{pitman2003} its  size-biased sequence  $\hpi=(\hpi(i): i\in \NN)$ 
is characterized by the independence property of the variables 
$(W_j: j\in \NN)$ defining $\hpi$ through:  $\hpi(1)= W_1$ and 
$\hpi(j)= W_j \prod_{i=1}^{j-1}(1-W_i)$ for $j\geq 2$. They
are distributed as $W_j\sim \, \text{Beta}(1-\alpha, \theta +\alpha j)$ for $j\ge 1$. 

\medskip

In this Section we assume that $G:\aS\to \RR$ is of the sum-type $G(s)=\sum_{i\in \NN}g(s(i))$
with $g\ge 0$.
Denote by $e_{(a,b)}(h(X))$ the mean of $h(X)$ when $X\sim \hbox{Beta}(a,b)$.
Then, 
$E_{{\bf P}^{\alpha,\theta}}(G(S))=e_{(1-\alpha,\theta+\alpha)}\left(\frac{g(X)}{X}\right)$. 

\medskip

Let $\psi(x)=\Gamma'(x)/\Gamma(x)$ be the digamma function, which is strictly increasing on $(0,\infty)$. 
One has $e_{(a,b)}(-\log X)=\psi(a+b)-\psi(a)$, $e_{(a,b)}(X)=a/(a+b)$ and more generally 
$e_{(a,b)}(X^m)=\prod_{r=0}^{m-1}\frac{a+r}{(a+b+r)}$  for $m\in \NN$. Then, 
$$
E_{{\bf P}^{\alpha,\theta}}(\mH)=\psi(\theta+1)-\psi(1-\alpha),\; 
E_{{\bf P}^{\alpha,\theta}}(\Gin)=\frac{\theta+\alpha}{\theta+1},\; 
E_{{\bf P}^{\alpha,\theta}}(\Gin^{(\kappa)})=1-\prod_{r=0}^{\kappa}\frac{(1-\!\alpha+\!r)}{(\theta\!+1+\!r)}\,,
$$
when $\kappa\in \NN$. 
  
\subsection{Bayesian posterior means}
\label{sec4.1}

In \cite{pitman1996} formulae $(42)$ and $(33)$, it is shown that the transition kernel 
$\bP(\Pi_{n+1}| \Pi_n)=\bP(\pi^{n+1}|\pi^{n})$ in  (\ref{equ6})
is given by the Pitman formula, 
\begin{equation}
\label{equ9}
{\bf P}^{\alpha,\theta}(\pi^n\!+\!\delta_{k^n}(j)|\pi^n)\!=\!
\frac{\pi^n(j) \!-\! \alpha}{\theta \!+\! n}, \, j=1,..,k^n \hbox{ and }
\bP((\pi^n,1)| \pi^n) 
\!=\! \frac{\theta \!+\! \alpha k^n}{\theta\!+ \! n}.
\end{equation}
Now, let  $V_{n+1}$ be a $\B_{n+1}-$measurable function. From (\ref{equ7}) and (\ref{equ9}) one gets
\begin{equation}
\label{equ10}
E(V_{n+1}|\B_n)(\pi^n)=\sum\limits_{j=1}^{k^n} V_{n+1}(\pi^n+\delta_j) \frac{(\pi^n(j)-\alpha)}{(\theta+n)}
+V_{n+1}((\pi^n,1))\frac{(\theta+\alpha k^n)}{(\theta+n)}.
\end{equation}

In Corollary $20$ in \cite{pitman1996} it was proven that the posterior distribution 
${\bf P}^{\alpha,\theta}(\cdot | \pi^n)$ chooses a random partition with the same law as
$(p_1,\dots,p_{k^n},p_{k^n+1}S')$, where: 
\begin{eqnarray}
\nonumber
&{}& (p_1,\dots,p_{k^n},p_{k^n+1}) \sim P_\D=\hbox{Dirichlet}(\pi^n(1)\!-\!\alpha, \dots, 
\pi^n(k^n)\!-\!\alpha,\theta\!+\!\alpha k^n), \\ 
\label{equ11}
&{}& S'\!=\!(S'(1),S'(2),\dots) \sim {\bf P}^{\alpha,\theta\!+\!\alpha k^n},
\end{eqnarray}
and they are independent. 

\medskip

Let  $P_\D^i$ be the marginal law of $P_\D$ in the $i-$th component. 
Then, $P^{k^n+1}_\D$ is Beta$(\theta+\alpha k^n, n-\alpha k^n)$. 
Since $G(S)=\sum_{i\in \NN}g(S(i))$, from (\ref{equ11}) one obtains,
\begin{equation}
\label{equ12}
E_{\bP(\cdot | \pi^n)}(G(S))=-\sum_{i=1}^{k^n} E_{P_\D^i}(g(p_i))
+E_{P_\D^{k^n+1}\otimes {\bf P}^{\alpha, \theta+\alpha k^n}}(G(p_{k^n+1} S')).
\end{equation}

For $G=\mH$, $g(x)=-x\log x$ and $\mH(p_{k^n+1} S')=-p_{k^n+1}\log p_{k^n+1}+H(S')$.
From $E_{\bP^{\alpha,\theta\!+\!\alpha k^n}}(\mH)=\psi(\theta+\alpha k^n+1)-\psi(1-\alpha)$
and $e_{(a,b)}(-X \log X)=\frac{a}{a+b}(\psi(a+b+1)-\psi(a+1))$ one can compute
the Bayesian posterior mean of the entropy, it is 
\begin{equation}
\label{equ13}
E_{\bP(\cdot | \pi^n)}(\mH) \!=\! \psi(\theta+\!n+\!1) 
\!-\!\frac{(\theta\!+\!\alpha k^n)}{\theta\!+\!n}\psi(1\!-\!\alpha)-\!\frac{1}{\theta\!+\!n}\!\sum_{i=1}^{k^n} 
(\pi^n(i)\!-\!\alpha)\psi(\pi^n(i)\!-\!\alpha\!+\!1).
\end{equation}
This relation was shown in  formulae $(10)$ in \cite{archer2012} and 
$(12)$ and $(15)$ in \cite{archer2014}.  

\medskip

To get the Bayesian posterior mean of the  Gini index it is convenient to introduce  
$\oGin(s)=1-\Gin(s)=\sum_{i\in \NN} s(i)^2$. 
From (\ref{equ12}) one gets
$$
E_{\bP(\cdot | \pi^n)}(\oGin)\!=\sum_{i=1}^{k^n} E_{P_\D^i}(p^2_i)+E_{P_\D^{k^n+1}}(p^2_{k^n+1})
E_{\bP^{\alpha,\theta\!+\!\alpha k^n}}(\oGin(S')).
$$
From $E_{\bP^{\alpha,\theta\!+\!\alpha k^n}}(\oGin)=\frac{1-\alpha}{\theta+1+\alpha k^n}$ 
and $e_{(a,b)}(X^2)=\frac{a(a+1)}{((a+b)(a+b+1)}$ we get
\begin{equation}
\label{equ14}
E_{\bP(\cdot | \pi^n)}(\Gin)
\!=1\!-\!\Big(\sum_{i=1}^{k^n} \frac{(\pi^n(i)\!-\alpha)(\pi^n(i)\!-\alpha\!+1)}{(\theta\!+n)(\theta\!+n\!+1)}\Big)
- \frac{(\theta\!+\alpha k^n)(1\!-\alpha)}{(\theta\!+n)(\theta\!+n\!+1)}.
\end{equation}
For $\kappa\in \NN$ one finds
\begin{equation}
\label{equ14A}
E_{\bP(\cdot | \pi^n)}(\Gin^{(\kappa)})
=1\!-\Big(\sum_{i=1}^{k^n} \frac{\prod_{r=0}^\kappa (\pi^n(i)\!-\alpha\!+r)}{\prod_{r=0}^\kappa (\theta\!+n\!+r)}\Big)
-\frac{(\theta\!+\alpha k^n)\prod_{r=0}^{\kappa-1}(1\!-\alpha\!+r)}{\prod_{r=0}^\kappa (\theta\!+n\!+r)}
\end{equation}

\smallskip

\subsection{Some computations on the martingale property}
\label{sec4.2}
In \cite{archer2014} it was computed the second moment of the entropy Bayesian estimator.
It was based upon the following  size-biased picking formula for sum-functions
$G(S)=\sum_{i\in \NN}g(S(i))$, see (11) in \cite{archer2014}:
$$
E_{\bP}\big(\sum_{i\neq j} g(S(i)g(S(j))\big)
=E_{\bP}\left(\frac{g(\hpi(1))g(\hpi(2))(1-\hpi(1))}{\hpi(1)\hpi(2)}\right).
$$
By using the independence of $W_1$, $W_2$, this gives 
$$
E_{\bP^{\alpha,\theta}}(G^2)=e_{(1-\alpha,\theta+\alpha)}\left(\frac{g(x)^2}{x}\right)
+ e_{(1-\alpha,\theta+\alpha)} \left(\frac{g(x)(1-x)}{x}\right) e_{(1-\alpha,\theta+2\alpha)}\left(\frac{g(x)}{x}\right).
$$             
Its computation serves to precise the Doob inequality 
$E_\bP\left(\sup\limits_{n\in\NN}\big(E_{\bP(\cdot | \pi^n)}(G)\big)^2\right)$.
The above formula together with $e_{(a,b)}((\log X)^2)=(\psi'(a)-\psi'(a+b))+(\psi(a)-\psi(a+b))^2$,
gives the second moment of the entropy supplied in  formula $(30)$ in \cite{archer2014},
\begin{eqnarray*}
E_{{\bf P}^{\alpha,\theta}}(\mH^2)&=&
\frac{1-\alpha}{\theta+1}\Big(\left(\psi'(2-\alpha)-\psi'(\theta+2))+(\psi(2-\alpha)-\psi(\theta+2)\right)^2\Big)\\
&{}&+\frac{\theta+\alpha}{\theta+1}(\psi(1-\alpha)-\psi(\theta+2))(\psi(1-\alpha)-\psi(\theta+1+\alpha)),
\end{eqnarray*}
For the Gini function $G=\Gin$ we use the value $e_{(a,b)}(X^3)$ to get,
$$
E_{{\bf P}^{\alpha,\theta}}(\Gin^2)
=\frac{\theta+2\alpha-1}{\theta+1}+\frac{(1-\alpha)(2-\alpha)(3-\alpha)}{(\theta+1)(\theta+2)(\theta+3)}+
\frac{(\theta+\alpha)(1-\alpha)^2}{(\theta+2)(\theta+1)(\theta+\alpha+1)}.
 $$

\medskip

Up to the end of this section, the martingale defined by the Bayesian posterior  mean of some integrable function 
$G$ is denoted by,
\begin{equation}
\label{equ15}
Z^{\rm G}_n(\Pi)=E_{\bP(\cdot| \pi^n)}(G),  \; \Pi\in \omN, n\in \NN. 
\end{equation}
We recall that $\pi^n$ is the $n-$th coordinate of $\Pi\in \omN$ and
$\pi^{n+1}\in \C(\pi^n)=\{\pi^n+\delta_{k^n}(j): j=1,..,k^n\}\cup \{(\pi^n,1)\}$.

\begin{proposition}
\label{prop2}
For the entropy $\mH$, the martingale difference 
$|Z^{\rm \mH}_{n+1}(\Pi)-Z^{\rm \mH}_n(\Pi)|$ is uniformly bounded,
\begin{equation}
\label{equ16}
\sup_{n\in \NN}\sup_{\Pi\in \omN}|Z^{\rm \mH}_{n+1}(\Pi)-Z^{\rm \mH}_n(\Pi)|<\infty.
\end{equation}
\end{proposition}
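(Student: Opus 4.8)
The plan is to argue directly from the closed-form expression (\ref{equ13}) for $Z^{\mH}_n=E_{\bP(\cdot|\pi^n)}(\mH)$ and to control the increment $Z^{\mH}_{n+1}-Z^{\mH}_n$ by splitting according to the two transitions $\pi^n\to\pi^{n+1}$ recorded in $\C(\pi^n)$. Either the $(n{+}1)$-th individual joins an existing class $j$, so that $\pi^{n+1}=\pi^n+\delta_{k^n}(j)$ and $k^{n+1}=k^n$, or it starts a new class, so that $\pi^{n+1}=(\pi^n,1)$ and $k^{n+1}=k^n+1$. In both cases only a single summand of $A_n:=\sum_{i=1}^{k^n}(\pi^n(i)-\alpha)\psi(\pi^n(i)-\alpha+1)$ is altered, and the prefactors $1/(\theta+n)$ and $(\theta+\alpha k^n)/(\theta+n)$ shift in a controlled way, so the increment can be written out exactly.

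The essential algebraic simplification uses the digamma recurrence $\psi(x+1)=\psi(x)+1/x$. In the existing-class case, writing $m=\pi^n(j)$, the only change in the sum is $(m+1-\alpha)\psi(m+2-\alpha)-(m-\alpha)\psi(m+1-\alpha)$, which collapses to $\psi(m+1-\alpha)+1$, while $\psi(\theta+n+2)-\psi(\theta+n+1)=1/(\theta+n+1)$. Collecting terms yields
\[
Z^{\mH}_{n+1}-Z^{\mH}_n=\frac{1}{\theta+n+1}+\frac{A_n+(\theta+\alpha k^n)\psi(1-\alpha)}{(\theta+n)(\theta+n+1)}-\frac{\psi(\pi^n(j)-\alpha+1)+1}{\theta+n+1}.
\]
The new-class case produces an entirely analogous expression, with $(1-\alpha)\psi(2-\alpha)$ and an extra term $\alpha\psi(1-\alpha)/(\theta+n+1)$ taking the place of the final summand.

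Then I would bound each piece uniformly, for fixed $\alpha,\theta$. Since $\psi$ is strictly increasing on $(0,\infty)$ and every argument that appears is at least $1-\alpha>0$ or $2-\alpha>1$, all the digamma values lie between the fixed constants $\psi(1-\alpha)$, $\psi(2-\alpha)$ and $\psi(n+1-\alpha)=\log n+O(1)$. Hence $|\psi(\pi^n(j)-\alpha+1)|\le C\log n$, so the last summand is $O(\log n/n)$; the bound $k^n\le n$ makes the $(\theta+\alpha k^n)$ contribution $O(1/n)$. The only genuinely growing quantity is $A_n$, and here I use the mass identity $\sum_{i=1}^{k^n}(\pi^n(i)-\alpha)=n-\alpha k^n\le n$ together with $|\psi(\pi^n(i)-\alpha+1)|\le C\log n$ to get $|A_n|\le C\,n\log n$, so that $A_n/\big((\theta+n)(\theta+n+1)\big)=O(\log n/n)$.

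Consequently every term of the increment tends to $0$ as $n\to\infty$, uniformly over $\Pi\in\omN$, while for each fixed $n$ the increment is trivially finite (all sums are finite and $k^n\le n$); taking the supremum over $n$ and $\Pi$ then gives (\ref{equ16}). The main obstacle is precisely the control of the growing sum $A_n$: one must notice that its potential linear-in-$n$ size, after being divided by the quadratic factor $(\theta+n)(\theta+n+1)$, still vanishes, the crucial input being the elementary identity $\sum_i(\pi^n(i)-\alpha)=n-\alpha k^n$ combined with the logarithmic growth of $\psi$.
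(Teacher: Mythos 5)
Your proof is correct and follows essentially the same route as the paper: both start from the closed form (\ref{equ13}), use the digamma identity $x\psi(x+1)-(x-1)\psi(x)=\psi(x)+1$ (your recurrence $\psi(x+1)=\psi(x)+1/x$ is exactly this) to collapse the one-step change in $A_n$, and then bound $A_n\le C\,n\log n$ via $\sum_i(\pi^n(i)-\alpha)\le n$ and the monotone, logarithmic growth of $\psi$, so that every term in the increment is $O(\log n/n)$ uniformly in $\Pi$. Your exact evaluation $\psi(\theta+n+2)-\psi(\theta+n+1)=1/(\theta+n+1)$ and your signs in the displayed increment are in fact cleaner than the paper's version, which contains minor sign typos that do not affect the bound.
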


\begin{proof}
Let $A_n(\pi^n)=\sum_{i=1}^{k^n} (\pi^n(i)\!-\!\alpha)\psi(\pi^n(i)\!-\!\alpha\!+\!1)$.
From (\ref{equ13}) we get,
\begin{eqnarray*}
Z^{\rm \mH}_{n+1}(\pi^{n+1})\!-\!Z^{\rm \mH}_n(\pi^n)\!\!&=&\!\!(\psi(\theta\!+\!n\!+\!2)\!-\!\psi(\theta\!+\!n\!+\!1))
\!-\!\frac{(\theta+\alpha k^n)\psi(1-\alpha)}{(\theta+n)(\theta+n+1)}\\
&{}&\!\!-\frac{\alpha\psi(1-\alpha)}{(\theta+n+1)}{\bf 1}(k^{n+1}\!=\!k^n\!+\!1)
-\frac{A_n+(\theta+n)(A^1_n+A^2_n)}{(\theta+n)(\theta+n+1)},
\end{eqnarray*}
with 
\begin{eqnarray*}
A^1_n\!\!\!\!&=&\!\!\!\!\!\sum_{j=1}^{k^n}\!
\Big(\!(\pi^n(j)\!+\!1\!-\!\alpha)\psi(\pi^n(j)\!-\!\alpha\!+\!2)\! -\!(\pi^n(j)\!-\!\alpha)\psi(\pi^n(j)\!-\!\alpha\!+\!1)\! \Big) 
{\bf 1}(\pi^{n+1}\!=\!\!\pi^n\!+\!\delta_{k^n}(j)),\\
A^2_n\!\!\!\!&=&\!\!(1-\alpha)\psi(2-\alpha){\bf 1}(\pi^{n+1}\!=\!(\pi^n,1)).
\end{eqnarray*}
From the equality
\begin{equation}
\label{equ17}
\forall x>0:\quad x\psi(x+1)-(x-1)\psi(x)=\psi(x)+1\,, 
\end{equation}
we get $(\pi^n(j)\!+\!1\!-\!\alpha)\psi(\pi^n(j)\!-\!\alpha\!+\!2)
\!-\!(\pi^n(j)\!-\!\alpha)\psi(\pi^n(j)\!-\!\alpha\!+\!1)\!=\!\psi(\pi^n(i)\!-\!\alpha\!+\!1)\!+\!1$, so
$$
A^1_n\!=\!\sum_{j=1}^{k^n}(\psi(\pi^n(j)\!-\!\alpha\!+\!1)+\!1) {\bf 1}(\pi^{n+1}\!=\!\pi^n\!+\!\delta_{k^n}(j)).
$$
Since $\{\pi^{n+1}\!=\!(\pi^n,1)\!\}\!=\!\{k^{n+1}\!=\!k^n\!+\!1\}$ and by using the equality
$(1-\alpha)\psi(2\!-\alpha)\!+\alpha\psi(1-\alpha)\!=\!\psi(1\!-\alpha)\!+1$, which also follows from (\ref{equ17}), we get 
\begin{eqnarray}
\nonumber
Z^{\rm \mH}_{n+1}(\pi^{n+1}\!)\!-\!Z^{\rm \mH}_n(\pi^n)
\!\!\!\!&=&\!\!\!\!
(\psi(\theta\!+\!n\!+\!2)\!-\!\psi(\theta\!+\!n\!+\!1))\!+
\!\frac{(\theta\!+\!\alpha k^n)(1\!-\!\alpha)\psi(1\!-\!\alpha)\!-\!A_n(\pi^n)}{(\theta\!+\!n)(\theta\!+\!n\!+\!1)}\\
\nonumber
&{}& \,
-\sum_{j=1}^{k^n}\! \frac{(\psi(\pi^n(j)\!-\!\alpha\!+\!1)\!+\!1)}
{(\theta\!+\!n\!+\!1)} {\bf 1}(\pi^{n+1}\!=\pi^n\!+\!\delta_{k^n}(j))\\
\label{equ18}
&{}& \; -\frac{\psi(1\!-\alpha)\!+1}{(\theta\!+n\!+1)}{\bf 1}(\pi^{n+1}\!=(\pi^n,1)).
\end{eqnarray}
To get uniform bounds of these terms, we use $|\psi(x)- \log(x)|\le x^{-1}$ as $x\to \infty$. Then 
$\psi(\theta\!+\!n\!+\!2)\!-\!\psi(\theta\!+\!n\!+\!1)\to 0$ as $n\to \infty$. Since $\psi$ is increasing, we also get
$$
\frac{(\psi(\pi^n(j)\!-\!\alpha\!+\!1)+1)}{(\theta+n+1)}\le \frac{\psi(n-\alpha+1)}{(\theta+n+1)}\to 0 \hbox{ as } n\to \infty.
$$
Then, the last two terms at the right hand side in (\ref{equ18}), which are on the disjoint elements 
$\pi^{n+1}\in \C(\pi ^n)$, have a uniform bound on $n$ and $\Pi$. Finally, 
$A_n(\pi^n)\le \psi(n\!-\alpha\!+1)(\sum_{i=1}^{k^n} \pi^n(i))\!=\!n \psi(n\!-\alpha\!+1)$ and then 
$\frac{A_n}{(\theta+n)(\theta+n+1)}$ 
is uniformly bounded on  $n$ and $\Pi$.  Then, the
martingale difference $Z^{\rm \mH}_{n+1}(\pi^{n+1})-Z^{\rm \mH}_n(\pi^n)$ is uniformly bounded, 
and (\ref{equ16}) is satisfied.
\end{proof}

\medskip

\begin{remark}
\label{rem3}
Even if $Z^{\rm \mH}_{n+1}(\pi^{n+1})-Z^{\rm \mH}_n(\pi^n)$ is uniformly bounded 
one cannot state a  Central Limit Theorem for the martingale difference as in \cite{ibra} 
(also see the Introduction in \cite{ouchti}), 
because besides the uniformly boundedness this property also requires 
$\sum_{n\ge 0}\sigma_{\rm \mH}^2(n)=\infty$, 
where 
$$\sigma_{\rm \mH}^2(n)=E_{\bP}(E((Z^{\rm \mH}_{n+1}-Z^{\rm \mH}_n)^2|\B_n))
=E_{\bP}(E((Z^{\rm \mH}_{n+1})^2|\B_n)-(Z^{\rm \mH}_n)^2).
$$
But, this property does not hold, on the contrary we have 
$\sum_{n\ge 0}\sigma_{\rm \mH}^2(n) \le E_\bP(\mH^2)<\infty$.
For $G\!=\!\Gin$,  
$|Z^{\rm \Gin}_{n+1}(\Pi)\!-\!Z^{\rm \Gin}_n(\Pi)|\!\le \!1$ and
$\sum_{n\ge 0}\sigma_{\rm \Gin}^2(n) \le E_\bP(\Gin^2)<\infty$.
\end{remark}

\medskip

\section{Bayesian posterior and plug-in estimators for the entropy and the Gini indexes}
\label{sec5}
We will state some properties shared by the Bayesian and the plug-in estimators, 
for the entropy and the (generalized) Gini indexes. 
To a function $G$ one associates two sequences of estimators, 
$$
(E_{\bP(\cdot|\pi^n)}(G): n\in \NN) \hbox{ and } (G(\pi^n): n\in \NN), 
$$
the second one is called the plug-in estimator. We seek to state some relations between them. 

\smallskip

In next Section we describe their limit behavior for exchangeable random partitions. 
For the Bayesian posterior estimator this behavior is given by Proposition \ref{prop1} and for the 
plug-in estimators the behavior is obtained from \cite{antos}. 

\smallskip
 
Further, in Section (\ref{sec5.2}) and ({\ref{sec5.3}), we look for relations between these two estimators 
for their one step evolution in time and maxmin properties at some fixed time. This is discussed 
for the entropy and the generalized Gini indexes showing that in the framework of PDP,  they have 
similar properties, in
particular at the times of discovery of new species. In these sections the generalized Gini index is assumed
to have an integer coefficient.

\medskip

\subsection{Pointwise limit of the plug-in estimators for exchangeable partitions}
\label{sec5.1} 
We recall that the sequence $\X=(X_l: l\ge 1)$ of i.i.d. Uniform r.v's in $[0,1]$ is independent 
of $S$. To be in the body of \cite{antos} one fixes $s\in \aS$, consider $\J^s$ its associated fixed 
countable partition of the unit interval and define $(Y_l: l\in \NN)$ by
$Y_l=i$ if $X_l\in J^s(i)$ for $i\in \NN$. The sequence $(Y_l: l\in \NN)$ is i.i.d. respect to 
$\PP(\cdot |s)$, and this is the frame used in \cite{antos} to
study the plug-in estimators of functions of the type $G(s)=\sum_{i\in \NN} g_i(s(i))$, that is the partition
of the unit interval is fixed. 

\medskip

In our original setting we assume $E_P(|G|)<\infty$, 
so $G(s)$ is finite a.s. The plug-in estimator of $G$ at step $n$ is $G(\pi^n/n)$. 
Since $G$ is symmetric, \cite{antos} serves to study the plug-in estimators of sum-type functions 
$G(s)=\sum_{i\in \NN} g(s(i))$. So, if we are able to use \cite{antos} and show that 
$G(\pi^n/n)\to G\;\; \PP(\cdot |s)$-a.s. and this holds $P$-a.s in $\aS$, then
one should get $G(\pi^n/n)\to G\;\; \PP-$a.s. Thus, we get:

\smallskip

\begin{proposition}
\label{prop3}
In the frame of exchangeable random partitions and assuming $E_P(\mH)<\infty$ (for instance in the PDP case) 
we have
\begin{eqnarray*}
&{}&\lim\limits_{n\to \infty} E_{\bP(\cdot | \pi^n)}(\Gin^{(\kappa)})= \Gin^{(\kappa)}
=\lim\limits_{n\to \infty} \Gin^{(\kappa)}(\pi^n/n) \; \; \PP\!-\!\hbox{a.s. and in } L^p(\PP) \; \forall p\ge 1;\\
&{}& \hbox{ and } 
\lim\limits_{n\to \infty} E_{\bP(\cdot | \pi^n)}(\mH)= \mH
=\lim\limits_{n\to \infty}  \mH(\pi^n/n) \; \; \PP\!-\!\hbox{a.s.},
\end{eqnarray*}
where the left limit of the last relation also holds in $L^1(\PP)$. 
\end{proposition}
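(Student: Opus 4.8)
The plan is to separate the statement into its two leftmost equalities, which concern the Bayesian posterior estimators, and its two rightmost equalities, which concern the plug-in estimators; the central identities then amount to evaluating $G$ at the a.s. limit of $\pi^n/n$. For the Bayesian side I would invoke Proposition \ref{prop1} directly. Since $\Gin^{(\kappa)}$ is bounded by $1$, one has $E_P(|\Gin^{(\kappa)}|^p)<\infty$ for every $p\ge 1$, so Proposition \ref{prop1} gives $E_{\bP(\cdot|\pi^n)}(\Gin^{(\kappa)})\to\Gin^{(\kappa)}$ both $\PP$-a.s. and in $L^p(\PP)$ for all $p\ge 1$. Under the standing hypothesis $E_P(\mH)<\infty$, the same Proposition applied to $G=\mH$ yields $E_{\bP(\cdot|\pi^n)}(\mH)\to\mH$ $\PP$-a.s. and in $L^1(\PP)$, which accounts for the parenthetical restriction to $L^1$ for the entropy.

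For the plug-in side the strategy is the conditional reduction outlined in Section \ref{sec5.1}. Fixing $s\in\aS$ and working under $\PP(\cdot|s)$, the sequence $(Y_l)$ given by $Y_l=i$ iff $X_l\in J^s(i)$ is i.i.d. from the countable law $(|J^s(i)|:i\in\NN)$, and $\pi^n/n$ is precisely its empirical frequency vector. I would then verify the finiteness hypotheses of \cite{antos}: for $\Gin^{(\kappa)}$ they hold trivially because the functional is bounded, while for $\mH$ the assumption $E_P(\mH)<\infty$ forces $\mH(s)<\infty$ for $P$-a.e. $s$. The a.s. consistency theorem of \cite{antos} for sum-type functionals then gives $G(\pi^n/n)\to G$ $\PP(\cdot|s)$-a.s. for $P$-a.e. $s$, where I use that $G$ is symmetric, so that its value on the size-biased limit $\hpi$ of $\pi^n/n$ equals $G(s)$.

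It remains to lift the conditional convergence to an unconditional one and to obtain the $L^p$ statement for $\Gin^{(\kappa)}$. I would disintegrate along $d\bP(s,\Pi)=\bP(\Pi|s)\,dP(s)$: writing $E(s)$ for the event $\{G(\pi^n/n)\to G\}$, the previous step shows $\PP(E(s)|s)=1$ for $P$-a.e. $s$, whence $\PP(G(\pi^n/n)\to G)=\int_{\aS}\PP(E(s)|s)\,dP(s)=1$, which is the desired $\PP$-a.s. convergence of the plug-in estimators. Finally, for $\Gin^{(\kappa)}$ both the estimator and its limit are bounded by $1$, so dominated convergence promotes the a.s. convergence to $L^p(\PP)$ for every $p\ge 1$.

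The main obstacle I anticipate is the faithful application of \cite{antos}, which is phrased for a single fixed countable distribution: the substantive work lies in checking that its consistency hypotheses are genuinely met by $\mH$ and $\Gin^{(\kappa)}$ at $P$-a.e. fixed $s$, and in securing the measurability of $s\mapsto\PP(E(s)|s)$ needed to make the disintegration step rigorous. By contrast, the Bayesian half and the boundedness argument for the $L^p$ upgrade are routine once Proposition \ref{prop1} is in hand.
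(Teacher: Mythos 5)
Your proposal is correct and follows essentially the same route as the paper: Proposition \ref{prop1} for the Bayesian estimators, the consistency results of \cite{antos} under $\PP(\cdot\,|\,s)$ for the plug-in estimators, lifting to $\PP$ by integrating over $P$, and boundedness of $\Gin^{(\kappa)}$ for the $L^p$ upgrade. Your version is in fact slightly more careful than the paper's, since you make explicit the disintegration $\PP(G(\pi^n/n)\to G)=\int_{\aS}\PP(E(s)\,|\,s)\,dP(s)$ and the verification that $E_P(\mH)<\infty$ implies $\mH(s)<\infty$ for $P$-a.e.\ $s$, both of which the paper treats as immediate.
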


\begin{proof}
We use   (\ref{equ3}) in Proposition \ref{prop1} for the limit of the Bayesian estimators and for
the plug-in estimators this is obtained straightforwardly:

$\cdot\;$ From \cite{antos} pp. 169-172, we get  
$\Gin^{(\kappa)}=\lim\limits_{n\to \infty}  \Gin^{(\kappa)}(\pi^n/n)$
$\PP(\cdot |s)$-a.s., and this holds $P$-a.s in $\aS$, so the $\PP-$ a.s. convergence follows.
Since it is uniformly bounded by $1$ the convergence holds for all $L^p(\PP)$, $p\ge 1$;

\smallskip

$\cdot\;$ From  Theorem $2$ in \cite{antos} we get
$\mH=\lim\limits_{n\to \infty}  \mH(\pi^n/n)$ $\PP(\cdot |s)$-a.s., since this holds $P$-a.s in $\aS$
the $\PP-$a.s. convergence  follows.
\end{proof}

\medskip

We notice that the tools and results supplied in \cite{antos} are not sufficient for showing the $L^p(\bP)$ 
convergence of  the plug-in estimators that are not uniformly bounded in $s$, because 
besides the convergence of the estimator on $L^p(\bP(\cdot|s))$ one requires to have a control 
of its behavior on $s\in \aS$.  
For instance,  in Theorem $2$ in \cite{antos} it is shown the $L^2(\PP(\cdot |s))$ convergence 
of the entropy, and if one wishes to use this result to state the $L^2(\PP)-$ convergence, one
should have an appropriated control on $s\in \aS$ of a sequence depending on 
$E_{\bP(\cdot |s)}(\mH(\pi^n/n))$, which, to our view, is not the case or at least cannot be stated
in an easy way for general exchangeable partitions. 

\medskip

In relation to above result, in \cite{archer2014}  it was shown that in the framework of the PDP one has 
$|\mH(\pi^n/n)-E_{\bP(\cdot |\pi^n)}(\mH)|\to 0$  in probability as $n\to \infty$. The  
proof uses heavily formula (\ref{equ13}) and
Proposition $2$ in \cite{gnedin}. 

\medskip

\subsection{Maxmin property along trajectories of the plug-in estimators}
\label{sec5.2}
Let us fix some $s\in \aS$ and  $\J^s$ be a fixed countable partition of the unit interval. 
Let us describe what happens along a trajectory of the plug-in estimator. So, the observations 
$X_1,\dots,X_n$ are grouped into classes enumerated sequentially following the time of discovery, 
this defines a partition $\xi^n$ of $I_n=\{1,\dots,n\}$ and its classes are also called 
the species  discovered up to $n$. We set $\pi^n=(\pi^n(j)=\# \xi(j): j=1,\dots,k^n)$. 
Let $j^*$ be the class containing $X_n$, 
so, if $j ^*$ is firstly observed before or at time $n-1$ then
$k^n=k^{n-1}$ and $\pi^{n}(j^*)=\pi^{n-1}(j^*)+1$, and if $X_n$ defines a new class then
$k^n=k^{n-1}+1$, $j^*=k^n$ and $\pi^n(k^n)=1$.

\smallskip

We will describe four properties seen in a trajectory of the plug-in estimators for the entropy and 
the Gini functions, that will be retrieved for the Bayesian posterior estimators.  The results hold for 
the generalized Gini indexes, but with parameter $\kappa\in \NN$, and this condition is always assumed.

\medskip

We put  $0\log 0=0$. Let us consider the following sequences for $m\in \NN$, 
$$
\gamma_0(m)\!=\!m\log m-(m-1)\log(m-1), \; \gamma_\kappa(m)\!=\!m^{\kappa+1}-(m-1)^{\kappa+1}
=\sum_{l=0}^\kappa \binom{\kappa}{l}(m-1)^l\,,
$$
that are strictly increasing on $m\in \NN$ . Also $\gamma_0(m)\ge 0$, $\gamma_\kappa(m)\ge 1$. 

\medskip

Recall that $\mN_n$ is the set of all elements $\pi^n$. It is useful to denote $\1^n=(1,..,1)$ 
the $\pi^n$ vector of length $n$ constituted only of $1's$, so
when each variable $X_l$ defines a different class for $=1,..,n$, or equivalently when $k^n=n$. 

\smallskip

The plug-in entropy and the Gini functions at step $n$ are respectively given by 
\begin{eqnarray*}
\mH(\pi^n) &=& - \sum_{j=1}^{k^n}
\frac{\pi^n(j)}{n} \log \left(\frac{\pi^n(j)}{n}\right)=-\frac{1}{n}\big(\sum_{j=1}^{k^n}\pi^n(j)\log \pi^n(j)-n\log n\big),\\
\Gin^{(\kappa)}(\pi^n) &=&1- \sum_{j=1}^{k^n}\left(\frac{\pi^n(j)}{n}\right)^{\kappa+1}
=-\frac{1}{n^{\kappa+1}}\big(\sum_{j=1}^{k^n}(\pi^n(j))^{\kappa+1}-n^{\kappa+1}\big).
\end{eqnarray*}
Let $I_n$ be endowed with the uniform probability that gives a weight $1/n$ to each $j\in I_n$. 
Hence $\mH(\pi^n)$ is the Shannon entropy of the partition $\xi^n$ of $I_n$.

\medskip

${\bf Ia}$. One has $\mH(\1^n)=\log n$ and $\Gin^{(\kappa)}(\1^n)=(n^{\kappa}-1)/n^\kappa$. Moreover, 
$0\le \mH(\pi^n)\le \mH(\1^n)$ and $0\le \Gin(\pi^n)\le \Gin(\1^n)$, so
for both, $\mH(\pi^n)$ and $\Gin(\pi^n)$, the maximum value is attained at $\1^n$,
and the minimum value is $0$ and happens only when $k^n=1$.

\medskip

${\bf IIa}$. We will introduce some quantities whose meaning for the entropy is the following one.
Let us select uniformly an individual in $I_n$. 
The entropy $\mH(\pi^n)$ is the mean information of $\xi^n$ defined by the species of 
the individuals up to $n$, 
and $\mH(\1^n)$ corresponds to the mean information of the individuals. 
Then, $\mH(\1^n)-\mH(\pi^n)$ is the mean information needed to identify an individual given 
that its species is determined, or the conditional entropy of the individual given the species.

\smallskip

Now, let us makes $n$ independent experiences, each one of them selecting uniformly an individual in 
$I_n$. Then, $n \mH(\pi^n)$ is the mean information given by the species of the $n$
independent individuals  and $n(H(\1^n)-\mH(\pi^n))$ is the mean information of the $n$ 
independent individuals given that their species are determined.

\smallskip

For the Gini index of parameter $k\in \NN$: $\Gin^{(\kappa)}(\1^n)\!-\!\Gin^{(\kappa)}(\pi^n)$, is
the probability that the species of $\kappa+1$ independent individuals are the same, but the individuals 
are not all equal. 

\smallskip

Define the sequences, 
\begin{equation}
\label{equ19}
\ell^{\rm \mH}(\pi^n)\!=\!n(\mH(\1^n)-\mH(\pi^n)),\; 
\ell^{\rm {\Gin^{(\kappa)}}}(\pi^n)\!=\!n^{\kappa+1}(\Gin^{(\kappa)}(\1^n)\!-\!\Gin^{(\kappa)}(\pi^n)), \; n\ge 1, 
\end{equation}
and take $\ell^{\rm \mH}(\pi^0)=0=\ell^{\rm {\Gin^{(\kappa)}}}(\pi^0)$. Both quantities, $\ell^{\rm \mH}(\pi^n)$
and $\ell^{\rm {\Gin^{(\kappa)}}}(\pi^n)$ are nonnegative.

\medskip

We have $\ell^{\rm \mH}(\pi^n)=\sum_{j=1}^{k^n}\pi^n(j)\log \pi^n(j)$, so
$\ell^{\rm \mH}(\pi^n)-\ell^{\rm \mH}_{n-1}(\pi^{n-1})=\gamma_0(\pi^n(j^*))\geq 0$,
and $\ell^{\rm \mH}(\pi^n)-\ell^{\rm \mH}(\pi^{n-1})=0$ only when $\pi^n(j^*)=1$, so if a 
new class is observed at time $n$.

\smallskip

On the other hand
$\ell^{\rm {\Gin^{(\kappa)}}}(\pi^n)=-n+\sum_{j=1}^{k^n}\pi^n(j)^{\kappa+1}$, then
$\ell^{\rm {\Gin^{(\kappa)}}}(\pi^n)-\ell^{\rm {\Gin^{(\kappa)}}}(\pi^{n-1})
=\big(\pi^n(j^*)^{\kappa+1}-(\pi^n(j^*)-1)^{\kappa+1}\big)-1\ge 0$,
that vanishes only when $\pi^n(j^*)=1$, similarly as it happens for the entropy.

\medskip

${\bf IIIa}$. Consider the sequences  $n \mH(\pi^n)$ and $n^{\kappa+1} \Gin(\pi^n)$. We have 
\begin{eqnarray*}
\Delta_n^{\rm \mH}(\pi^n)&=&n \mH(\pi^n)-(n-1)\mH(\pi^{n-1})
=\gamma_0(n)-\gamma_0(\pi^n(j^*)) \hbox{ and }\\
\Delta_n^{\rm {\Gin}^{(\kappa)}}(\pi^n)&=&n^{\kappa+1} \Gin^{(\kappa)}(\pi^n)-(n-1)^{\kappa+1} \Gin^{(\kappa)}(\pi^{n-1})
=\gamma_\kappa(n)-\gamma_\kappa(\pi^n(j^*)).
\end{eqnarray*}
Then,  $\Delta_n^{\rm \mH}(\pi^n)$ and $\Delta_n^{\rm {{\Gin}^{(\kappa)}}}(\pi^n)$ 
are nonnegative and they vanish for some $n$ 
only when $\pi^n(j^*)=n$, that is when all individuals belong to a single class, and they attain a maximum value 
when $\pi^n(j^*)=1$, so if a new class is observed at $n$.

\medskip

${\bf IVa}$. Let $\mN_n(k)$ be the set of all $\pi^n\in \mN_n$ with $k$ classes. Let $j,l$ be two 
different coordinates in $\{1,...,k\}$.
Let $\pi^n\in \mN_n(k)$ be such that $\pi^n(j)\ge \pi^n(l)+2$. We define $\opi^n \in \mN_n(k)$  
by $\opi^n(i)=\pi^n(i)$ for $i\in \{1,..,k\}\setminus \{j,l\}$ and 
$\opi^n(j)=\pi^n(j)-1$, $\opi^n(l)=\pi^n(l)+1$. We have the property,
\begin{equation}
\label{equ20}
\mH(\opi^n)\ge \mH(\pi^n)\hbox{ and } \Gin^{(\kappa)}(\opi^n)\ge \Gin^{(\kappa)}(\pi^n).
\end{equation}
This follows from  
$\,n(\mH(\opi^n)-\mH(\pi^n))=\gamma_0(\pi^n(j))-\gamma_0(\pi^n(l)+1)\ge 0$ and
$n^{\kappa+1}(\Gin^{(\kappa)}(\opi^n)-\Gin^{(\kappa)}(\pi^n))=\gamma_\kappa(\pi^n(j))-\gamma_\kappa(\pi^n(l)+1)\ge 0$.
 For the inequalities we use that $\gamma_0$ and $\gamma_\kappa$ are increasing functions.

\medskip

By iterating the inequality (\ref{equ20}) one shows that:
\begin{eqnarray*}
&{}&\mH(\pi_{\rm min(k)}^n)=\min\limits_{\pi^n\in \mN_n(k)}\mH(\pi^n) \,,  \;\,
\mH(\pi_{\rm max(k)}^n)=\max\limits_{\pi^n\in \mN_n(k)}\mH(\pi^n); \\
&{}& \Gin^{(\kappa)}(\pi_{\rm min(k)}^n)=\min\limits_{\pi^n\in \mN_n(k)}\Gin^{(\kappa)}(\pi^n)\,, \;\,
\Gin^{(\kappa)}(\pi_{\rm max(k)}^n)=\max\limits_{\pi^n\in \mN_n(k)}\Gin^{(\kappa)}(\pi^n);
\end{eqnarray*}
where $\pi^n_{\rm min(k)}$ is constituted by one class containing $n-k+1$ individuals and the other 
$k-1$ classes are singletons and $\pi^n_{\rm max(k)}$ is constituted by: $s$ classes with 
$\lceil n/k \rceil$ individuals and $k-s$ with  $\lfloor n/k \rfloor +1$ individuals, being $s=k-(n-k\lfloor n/k \rfloor)$.
Here, as usual, by $\lfloor n/k \rfloor$ we mean the integer part of $n/k$. 
We will say that $\pi^n_{\rm max(k)}$ divides $\{1,..,n\}$ into $k$ parts 'as uniform as possible'. 

\medskip

\subsection{Maxmin properties of the Bayesian posterior means for the PDP}
\label{sec5.3}
We will define the analogous notions and state similar results as those shown in ${\bf {Ia, IIa, IIIa, IVa}}$ 
in Section (\ref{sec5.2}) for the Bayesian posterior means of the entropy and the Gini indexes.
But, this is be proven in the framework of the PDP$(\alpha,\theta)$ with 
$0\leq \alpha< 1$ and $\theta>-\alpha$. These results
are stated for the entropy and the generalized Gini index with $\kappa\in \NN$ but the proofs will
be made in detail for the Gini index $\kappa=1$, to avoid heavy notation. 
But at some parts of the proofs we will put the formulae for the generalized Gini index. 

\medskip

We recall that the Bayesian estimators for $\mH$, $\Gin$ and $\Gin^{(\kappa)}$ 
are given in (\ref{equ13}), (\ref{equ14}) and (\ref{equ14A}), respectively.  
In many of the proofs  make in these sections we will use the following equalities for $x>0$: 

\smallskip

-for the entropy $x\psi(x+1)-(x-1)\psi(x)=\psi(x)+1$ given in (\ref{equ17}) and, 

\smallskip

-for the Gini index of parameter $\kappa\!\in \!\NN$,    
$\prod\limits_{r=0}^\kappa (x\!+r)-\prod\limits_{r=0}^{\kappa} (x\!-1\!+r)
=(\kappa\!+\!1)\prod\limits_{r=0}^{\kappa-1}(x\!+r)$.

\smallskip

These functions, $\psi(x)+1$ and $(\kappa+1)\prod_{r=0}^{\kappa-1}(x+r)$ 
are strictly increasing in $(0,\infty)$.  For the Gini index with $\kappa=1$, 
the product equality reduces to $x(x+1)-(x-1)x=2x$.

\medskip

${\bf Ib}$. The similar statement to ${\bf Ia}$ for the argmin of  the Bayesian posterior means is:
\begin{lemma}
\label{lemma1}
The minimum of the Bayesian posterior means of the entropy and the Gini function in $\mN_n$,
 is attained for a single class $k^n=1$.
\end{lemma}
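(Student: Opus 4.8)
The plan is to fix $n$ and reduce the statement to a purely combinatorial extremal problem over $\mN_n$, which I then solve by a convexity/superadditivity argument on a per-class contribution. First I would rewrite the closed forms (\ref{equ13}) and (\ref{equ14}) (resp.\ (\ref{equ14A})) by splitting the term $\theta+\alpha k^n$ into the constant $\theta$ plus a contribution $\alpha$ attached to each of the $k^n$ classes. For the entropy this gives
\[
E_{\bP(\cdot|\pi^n)}(\mH)=\psi(\theta+n+1)-\frac{\theta\,\psi(1-\alpha)}{\theta+n}-\frac{1}{\theta+n}\sum_{i=1}^{k^n}\phi(\pi^n(i)),
\]
with $\phi(m)=(m-\alpha)\psi(m-\alpha+1)+\alpha\psi(1-\alpha)$, and likewise for $\Gin^{(\kappa)}$ (the stated case being $\kappa=1$) with $\phi(m)=\prod_{r=0}^{\kappa}(m-\alpha+r)+\alpha\prod_{r=0}^{\kappa-1}(1-\alpha+r)$ and positive prefactor $1/\prod_{r=0}^{\kappa}(\theta+n+r)$. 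Since $n$ is fixed and the prefactor is strictly positive (as $\theta+n>0$, using $\theta>-\alpha$ and $n\geq1$), minimizing the posterior mean over $\mN_n$ is equivalent to maximizing $\Phi(\pi^n):=\sum_{i=1}^{k^n}\phi(\pi^n(i))$.

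Next I would record the two properties of $\phi$ that drive everything. A direct check gives $\phi(0)=0$: the $\alpha$-correction term is chosen precisely so as to cancel the value of the leading term at $m=0$ (namely $\prod_{r=0}^{\kappa}(r-\alpha)=-\alpha\prod_{r=0}^{\kappa-1}(1-\alpha+r)$, resp.\ $-\alpha\psi(1-\alpha)$). The first difference is then read off from the identities already isolated in the text: setting $d(m):=\phi(m)-\phi(m-1)$, one gets $d(m)=\psi(m-\alpha)+1$ for the entropy by (\ref{equ17}), and $d(m)=(\kappa+1)\prod_{r=0}^{\kappa-1}(m-\alpha+r)$ for the Gini index by the product equality. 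In both cases $m\mapsto d(m)$ is strictly increasing on $\{1,2,\dots\}$, because $\psi$ is strictly increasing and each factor $m-\alpha+r$ is positive and increasing; thus $\phi$ has strictly increasing increments.

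Then I would upgrade ``increasing increments with $\phi(0)=0$'' to strict superadditivity: for integers $a,b\geq1$,
\[
\phi(a+b)-\phi(a)=\sum_{j=1}^{b}d(a+j)>\sum_{j=1}^{b}d(j)=\phi(b)-\phi(0)=\phi(b),
\]
where the strict inequality uses $d(a+j)>d(j)$ for $a\geq1$. Hence $\phi(a+b)>\phi(a)+\phi(b)$. Consequently, if $\pi^n$ has $k^n\geq2$ classes, merging two of them (of sizes $a$ and $b$) into a single class of size $a+b$ yields another element of $\mN_n$ whose value of $\Phi$ is strictly larger. Iterating the merge collapses any configuration to the single-class partition $(n)$ while strictly increasing $\Phi$ at each step, so $\Phi$ attains its maximum on $\mN_n$ exactly at $k^n=1$; equivalently, both $E_{\bP(\cdot|\pi^n)}(\mH)$ and $E_{\bP(\cdot|\pi^n)}(\Gin^{(\kappa)})$ attain their minimum on $\mN_n$ exactly at $k^n=1$.

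The main obstacle is precisely the bookkeeping in the first step that forces $\phi(0)=0$: one must attach the per-class constant coming from the $\theta+\alpha k^n$ term ($\alpha\psi(1-\alpha)$ for the entropy, $\alpha\prod_{r=0}^{\kappa-1}(1-\alpha+r)$ for the Gini index) to \emph{each} class rather than keeping it global. Without this exact normalization the increments $d(m)$ are still increasing, but one only obtains $\phi(a+b)\geq\phi(a)+\phi(b)-\phi(0)$, which is too weak to force the maximizer to be a single class. Everything else is a routine telescoping and monotonicity computation.
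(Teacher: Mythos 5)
Your proposal is correct, and it takes a genuinely different route from the paper. You reduce the problem to maximizing a sum of per-class contributions $\Phi(\pi^n)=\sum_{i=1}^{k^n}\phi(\pi^n(i))$, where the crucial normalization (folding the per-class share $\alpha\psi(1-\alpha)$, resp. $\alpha\prod_{r=0}^{\kappa-1}(1-\alpha+r)$, of the $\theta+\alpha k^n$ term into $\phi$) forces $\phi(0)=0$; combined with the strictly increasing increments $d(m)=\psi(m-\alpha)+1$, resp. $d(m)=(\kappa+1)\prod_{r=0}^{\kappa-1}(m-\alpha+r)$, this yields strict superadditivity $\phi(a+b)>\phi(a)+\phi(b)$, so merging any two classes strictly increases $\Phi$ and the single-class configuration is the unique maximizer. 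The paper instead derives Lemma \ref{lemma1} as a byproduct of the machinery of ${\bf IVb}$: the transfer inequality (\ref{equ24}) (moving one individual from a larger to a smaller class raises the posterior mean) is iterated to identify the within-$k$ minimizer $\pi^n_{\rm min(k)}$ (one class of size $n-k+1$ plus $k-1$ singletons) as in (\ref{equ25}), and then a direct computation shows $E_{\bP(\cdot | \pi^n_{\rm min(k)})}(\mH)\le E_{\bP(\cdot | \pi^n_{\rm min(k+1)})}(\mH)$ (and likewise for $\Gin^{(\kappa)}$), so the minimum over $\mN_n$ decreases down to $k=1$. Your argument buys self-containedness (no need to identify the extremal configurations at fixed $k$), gives uniqueness of the minimizer for free, and treats $\mH$ and all $\Gin^{(\kappa)}$, $\kappa\in\NN$, uniformly through the single abstract lemma ``$\phi(0)=0$ plus increasing increments implies strict superadditivity''; the paper's route is longer for this lemma alone but reuses inequalities ((\ref{equ24}), (\ref{equ25})) that it needs anyway for the maxmin statements, and it additionally exhibits the explicit minimizers $\pi^n_{\rm min(k)}$ within each $\mN_n(k)$, which your merging argument does not provide. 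Your closing remark is also on point: without the exact normalization $\phi(0)=0$ the merging step would only give $\phi(a+b)\ge\phi(a)+\phi(b)-\phi(0)$, which does not force a single class, so that bookkeeping is indeed the essential ingredient of your approach.
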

This result will be shown further, at the end of ${\bf IVb}$.

\medskip

Let us now prove the similar statement to the argmax of the Bayesian posterior means. 

\smallskip

\begin{lemma}
\label{lemma2}
The maximum of the Bayesian posterior means of the entropy and the Gini function in $\mN_n$,
 is attained at $\1^n$, that is 
\begin{equation}
\label{equ21}
\max\limits_{\pi^n\in \mN_n} \! E_{\bP(\cdot | \pi^n)}(\mH)\!=\!E_{\bP(\cdot | \1^n)}(\mH) \hbox{ and }
\max\limits_{\pi^n\in \mN_n} \! E_{\bP(\cdot | \pi^n)}(\Gin^{(\kappa)})\!=\!E_{\bP(\cdot | \1^n)}(\Gin^{(\kappa)}).
\end{equation} 
\end{lemma}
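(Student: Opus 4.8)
The plan is to read off the maximizer directly from the closed forms (\ref{equ13}) and (\ref{equ14A}) by reducing each maximization to the minimization of a \emph{separable} cost, one contribution per class. Fix $n$. In (\ref{equ13}) the term $\psi(\theta+n+1)$ and the part $\tfrac{\theta}{\theta+n}\psi(1-\alpha)$ depend only on $n$; writing $\tfrac{\theta+\alpha k^n}{\theta+n}\psi(1-\alpha)$ as $\tfrac{\theta}{\theta+n}\psi(1-\alpha)+\tfrac{1}{\theta+n}\sum_{i=1}^{k^n}\alpha\psi(1-\alpha)$, I would absorb the $\alpha\psi(1-\alpha)$ into the sum, so that maximizing $E_{\bP(\cdot|\pi^n)}(\mH)$ over $\mN_n$ becomes minimizing $\sum_{i=1}^{k^n} g(\pi^n(i))$ with $g(m)=(m-\alpha)\psi(m-\alpha+1)+\alpha\psi(1-\alpha)$. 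The same bookkeeping applied to (\ref{equ14A}), distributing $\alpha k^n\prod_{r=0}^{\kappa-1}(1-\alpha+r)$ over the classes and dropping the constant $\theta$-term, turns the Gini maximization into minimizing $\sum_{i=1}^{k^n}h(\pi^n(i))$ with $h(m)=\prod_{r=0}^{\kappa}(m-\alpha+r)+\alpha\prod_{r=0}^{\kappa-1}(1-\alpha+r)$.

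Next I would compute the first differences of these costs from the two identities recalled in Section \ref{sec5.3}. Putting $x=m-\alpha$ in (\ref{equ17}) gives $g(m)-g(m-1)=\psi(m-\alpha)+1$, and the same identity at $x=1-\alpha$ gives $g(1)=\psi(1-\alpha)+1$. For the Gini, $x=m-\alpha$ in $\prod_{r=0}^{\kappa}(x+r)-\prod_{r=0}^{\kappa}(x-1+r)=(\kappa+1)\prod_{r=0}^{\kappa-1}(x+r)$ gives $h(m)-h(m-1)=(\kappa+1)\prod_{r=0}^{\kappa-1}(m-\alpha+r)$, and a one-line factorization gives $h(1)=(\kappa+1)\prod_{r=0}^{\kappa-1}(1-\alpha+r)$.

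The heart of the argument is a splitting inequality. Replacing one class of size $m\ge 2$ by a class of size $m-1$ together with a singleton changes the cost by $g(m-1)+g(1)-g(m)=\psi(1-\alpha)-\psi(m-\alpha)<0$ for the entropy, and by $h(m-1)+h(1)-h(m)=(\kappa+1)\big(\prod_{r=0}^{\kappa-1}(1-\alpha+r)-\prod_{r=0}^{\kappa-1}(m-\alpha+r)\big)<0$ for the Gini; the strict signs hold because $\psi$ is strictly increasing and $m-\alpha>1-\alpha$, respectively because each factor satisfies $m-\alpha+r>1-\alpha+r>0$. Hence, starting from any $\pi^n$ possessing a class of size at least $2$ and repeatedly peeling off singletons strictly lowers the cost and, after finitely many steps, reaches $\1^n$. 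Therefore $\1^n$ is the unique minimizer of the cost, equivalently the unique maximizer of each posterior mean, which is exactly (\ref{equ21}).

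The only place demanding care is the reduction itself: I must verify that the $k^n$-linear pieces are exactly what gets apportioned as $\alpha\psi(1-\alpha)$, respectively $\alpha\prod_{r=0}^{\kappa-1}(1-\alpha+r)$, to each class, since a wrong constant would break separability and invalidate the comparison. Once that is pinned down the monotonicity identities do all the work; following the convention announced in Section \ref{sec5.3}, I would write the details for $\kappa=1$, where the product identity collapses to $x(x+1)-(x-1)x=2x$, the general $\kappa\in\NN$ case being verbatim the same.
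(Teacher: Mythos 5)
Your proposal is correct and in substance coincides with the paper's proof: your splitting inequality $g(m-1)+g(1)<g(m)$ (and its Gini analogue for $h$) is exactly the paper's comparison (\ref{equ22}) between the two successors $\hpi^n$ and $\pi^n_+=(\pi^{n-1},1)$ of a common $\pi^{n-1}$, with both arguments resting on (\ref{equ17}) (resp.\ the product identity) to reduce matters to $\psi(1-\alpha)<\psi(m-\alpha)$ and its Gini counterpart. Your separable-cost bookkeeping and the iteration of singleton splittings down to $\1^n$ are a repackaging of the paper's ``iteration of (\ref{equ22})'', so the two proofs are essentially the same.
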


\begin{proof}
Let us fix $\pi^{n-1}$ with $k=k^{n-1}$ classes. Assume $\hpi^n$ also has $k$ classes 
and for a unique  $j$, $\hpi^n(j)=\pi^{n-1}(j)+1$, for all the other coordinates $i\neq j$, 
$\hpi^n(i)=\pi^{n-1}(i)$. Also consider $\pi^n_+=(\pi^{n-1},1)$ that has $k+1$ classes, 
the last one having a unique element.  By definition,
$\pi^n_+(j)=\hpi^n(j)-1$ and for all other $i\neq j$ in $\{1,..,k\}$ one has $\hpi^n_+(i)=\hpi^n(i)$.  
Let us show that for $\mH$ and $\Gin$ it holds:
\begin{eqnarray}
\nonumber
&{}&(\theta+n) \big(E_{\bP(\cdot | \pi^n_+)}(\mH)-E_{\bP(\cdot | \hpi^n)}(\mH)\big)>0 \hbox{ and }\\
&{}&
\label{equ22}
\big(\prod_{r=0}^\kappa(\theta+n\!+r)\big) 
\big(E_{\bP(\cdot | \pi^n_+)}(\Gin^{(\kappa)})-E_{\bP(\cdot | \hpi^n)}(\Gin^{(\kappa)})\big)>0.
\end{eqnarray}
Let us prove it for the entropy. From (\ref{equ13}) and by using (\ref{equ17}) one obtains
\begin{eqnarray*}
&{}&(\theta+n) (E_{\bP(\cdot | \pi^n_+)}(\mH)-E_{\bP(\cdot | \hpi^n)}(\mH))\\
&{}&=-\alpha\psi(1-\alpha)-(1-\alpha)\psi(2-\alpha)+1+\psi(\pi^{n-1}(j)+1-\alpha)\\
&{}& =-\psi(1-\alpha)+\psi(\pi^{n-1}(j)+1-\alpha)>0,
\end{eqnarray*}
because $\pi^{n-1}(j)+1>1$. 
Now let us show it for the Gini function with $\kappa=1$. From  (\ref{equ14}) and by using $(x+1)x-x(x-1)=2x$ 
for $x=\hpi^n(j)=\pi^{n-1}(j)+1$, one gets
\begin{eqnarray*}
&{}&(\theta+n) (\theta+n+1)(E_{\bP(\cdot | \pi^n_+)}(\Gin)-E_{\bP(\cdot | \hpi^n)}(\Gin))\\
&{}& =-\alpha(1-\alpha)-(1-\alpha)(2-\alpha)+2(\pi^{n-1}(j)+1-\alpha)\\
&{}&=-2(1-\alpha)+2(\pi^{n-1}(j)+1-\alpha)=2\pi^{n-1}(j)> 0.
\end{eqnarray*}
For the generalized Gini index with $\kappa\in \NN$ it follows from,
$$
\big(\!\prod_{r=0}^\kappa(\theta+n\!+r)\!\big)\!\big(\!E_{\bP(\cdot | \pi^n_+)}(\Gin^{(\kappa)})-E_{\bP(\cdot | \hpi^n)}(\Gin^{(\kappa})\big)
\!=\!(\kappa+\!1)\big(\!\prod_{r=1}^\kappa(\pi^{n-1}(j)\!+r\!-\alpha)-\prod_{r=1}^\kappa\!(r\!-\alpha)\!\big)\!>\!0.
$$
The iteration of equalities (\ref{equ22}) allow to get relation (\ref{equ21}).
\end{proof}

\medskip

${\bf IIb}$. Define the following sequences, 
\begin{eqnarray}
\label{equ23}
&{}&\aL^{\rm \mH}(\pi^n)
=(\theta+n)\left(E_{\bP(\cdot |1^n)}(\mH)-E_{\bP(\cdot | \pi^n)}(\mH)\right) \hbox{ and }\\
\nonumber
&{}& \aL^{\rm {\Gin^{(\kappa)}}}(\pi^n)
=\big(\prod_{r=0}^\kappa(\theta+n+r)\big) \left(E_{\bP(\cdot |1^n)}(\Gin^{(\kappa)})-E_{\bP(\cdot | \pi^n)}(\Gin^{(\kappa)})\right),
\end{eqnarray}
which are adapted to the filtration $(\sigma(\Pi_n): n\in \NN)$ and from Lemma \ref{lemma2} 
they are nonnegative. They satisfy the following properties 
(the one devoted to the entropy was firstly proved in \cite{martinez} Theorem $4.4$):

\begin{proposition}
\label{prop4}
 The functions $\aL^{\rm \mH}(\pi^n)-\aL^{\rm \mH}(\pi^{n-1})$ and 
$\aL^{\rm{\Gin^{(\kappa)}}}(\pi^n)-\aL^{\rm {\Gin^{(\kappa)}}}(\pi^{n-1})$ are nonnegative and they
vanish only when $\pi^n=(\pi^{n-1},1)$, so if a new class appears at $n$.
\end{proposition}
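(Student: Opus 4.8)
The plan is to reduce the statement to the explicit one–step increments of the closed forms (\ref{equ13}), (\ref{equ14}) and (\ref{equ14A}), and then to split according to whether the observation $X_n$ joins an already–visited class or opens a new one. The key preliminary step is to clear the $n$–dependent prefactors in the definition (\ref{equ23}) of $\aL$ before differencing. Multiplying (\ref{equ13}) by $(\theta+n)$ and (\ref{equ14A}) by $\prod_{r=0}^\kappa(\theta+n+r)$, the leading terms $(\theta+n)\psi(\theta+n+1)$ and $\prod_{r=0}^\kappa(\theta+n+r)$ are \emph{common} to the evaluation at $\1^n$ and at $\pi^n$, so they cancel in the difference. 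Writing $A_n(\pi^n)=\sum_{i=1}^{k^n}(\pi^n(i)-\alpha)\psi(\pi^n(i)-\alpha+1)$ as in Proposition \ref{prop2} and $C_n(\pi^n)=\sum_{i=1}^{k^n}\prod_{r=0}^\kappa(\pi^n(i)-\alpha+r)$, one is left with the clean closed forms
$$\aL^{\rm \mH}(\pi^n)=\alpha(k^n-n)\psi(1-\alpha)-n(1-\alpha)\psi(2-\alpha)+A_n(\pi^n),$$
$$\aL^{\rm \Gin^{(\kappa)}}(\pi^n)=\alpha(k^n-n)\textstyle\prod_{r=0}^{\kappa-1}(1-\alpha+r)-n\prod_{r=0}^\kappa(1-\alpha+r)+C_n(\pi^n),$$
which may now be differenced in $n$ term by term.

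Next I would split into two cases according to $\pi^n\in\C(\pi^{n-1})$, writing $j^*$ for the class of $X_n$. If $X_n$ opens a new class, then $k^n=k^{n-1}+1$ and $\pi^n=(\pi^{n-1},1)$: the increment of $\alpha(k^n-n)$ is $\alpha\cdot1-\alpha=0$, while the new singleton contributes $(1-\alpha)\psi(2-\alpha)$ to $A_n-A_{n-1}$ (resp.\ $\prod_{r=0}^\kappa(1-\alpha+r)$ to $C_n-C_{n-1}$), which exactly cancels the $-(1-\alpha)\psi(2-\alpha)$ (resp.\ $-\prod_{r=0}^\kappa(1-\alpha+r)$) produced by the increment of the middle term. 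Hence both differences are $0$ at a new–species time.

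If instead $X_n$ falls in an existing class, then $k^n=k^{n-1}$ and only the $j^*$–summand of $A_n$ (resp.\ $C_n$) changes. Here I would use (\ref{equ17}) in the form $x\psi(x+1)-(x-1)\psi(x)=\psi(x)+1$ with $x=\pi^{n-1}(j^*)-\alpha+1$, together with its Gini analogue $\prod_{r=0}^\kappa(x+r)-\prod_{r=0}^\kappa(x-1+r)=(\kappa+1)\prod_{r=0}^{\kappa-1}(x+r)$, and the two combination identities (both instances of these at $x=1-\alpha$) $(1-\alpha)\psi(2-\alpha)+\alpha\psi(1-\alpha)=\psi(1-\alpha)+1$ and $\prod_{r=0}^\kappa(1-\alpha+r)+\alpha\prod_{r=0}^{\kappa-1}(1-\alpha+r)=(\kappa+1)\prod_{r=0}^{\kappa-1}(1-\alpha+r)$. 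After cancellation the increments collapse to
$$\aL^{\rm \mH}(\pi^n)-\aL^{\rm \mH}(\pi^{n-1})=\psi(\pi^{n-1}(j^*)-\alpha+1)-\psi(1-\alpha),$$
$$\aL^{\rm \Gin^{(\kappa)}}(\pi^n)-\aL^{\rm \Gin^{(\kappa)}}(\pi^{n-1})=(\kappa+1)\Big(\prod_{r=0}^{\kappa-1}(\pi^{n}(j^*)-\alpha+r)-\prod_{r=0}^{\kappa-1}(1-\alpha+r)\Big).$$
Since $\pi^{n-1}(j^*)\ge1$ forces $\pi^n(j^*)\ge2$, and $\psi$ is strictly increasing while each factor $\pi^n(j^*)-\alpha+r$ strictly exceeds the positive number $1-\alpha+r$, both right–hand sides are strictly positive. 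Together with the vanishing in the new–class case this yields nonnegativity and the exact equality–iff–new–species characterization.

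I expect the only real obstacle to be the bookkeeping rather than any genuine difficulty: one must clear the $n$–dependent prefactors \emph{before} differencing, so that the $\psi(\theta+n+1)$ and $\prod_{r=0}^\kappa(\theta+n+r)$ pieces cancel across $\1^n$ and $\pi^n$ and not across $n$ and $n-1$, and one must check that in the existing–class case every constant term really recombines through the two algebraic identities above. For $\kappa=1$ the second display reduces to the transparent $2\pi^{n-1}(j^*)>0$ already visible in the computation in the proof of Lemma \ref{lemma2}, so I would carry out the details at $\kappa=1$ and merely record the general–$\kappa$ formulas, exactly as announced.
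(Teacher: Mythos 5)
Your proposal is correct and takes essentially the same route as the paper's proof: both clear the $n$-dependent prefactors in (\ref{equ23}), split the one-step transition into the existing-class case and the new-class case, and reduce via the identity (\ref{equ17}) and its product analogue to the strictly positive increment $\psi(\pi^{n-1}(j^*)+1-\alpha)-\psi(1-\alpha)$ (resp.\ its generalized Gini counterpart) versus exact vanishing at new-species times, carrying out details for $\kappa=1$ and recording the general-$\kappa$ formulas. The only difference is organizational — you derive a single closed form for $\aL(\pi^n)$ and difference it in $n$, while the paper evaluates $\aL$ separately at $\hpi^n$, $\pi^n_+=(\pi^{n-1},1)$ and $\pi^{n-1}$ — and your bookkeeping in fact confirms the computation while quietly correcting small sign/coefficient typos in the paper's intermediate expressions.
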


\begin{proof}
We make it for $\mH$ and $\Gin$. We have, 
\begin{eqnarray*}
&{}& (\theta+n)E_{\bf P(\cdot | \1^n)}(\mH)\!=\!(\theta\!+n)\psi(\theta\!+n\!+1)
\!-(\theta\!+\alpha n)\psi(1\!-\alpha)\!-n(1\!-\!\alpha)\psi(2\!-\alpha);\\
&{}& (\theta\!+\!n) (\theta\!+n\!+\!1)E_{\bP(\cdot | \1^n)}(\Gin)
\!=(\theta\!+n) (\theta\!+n\!+\!1)\!-n(1\!-\alpha)(2-\!\alpha)\!-(\theta\!+\alpha n)(1\!-\!\alpha).
\end{eqnarray*}

We continue with the notation used in the proof of Lemma \ref{lemma2}. 
So, $\pi^n_+=(\pi^{n-1},1)$ and $\hpi^n$ has $k=k^{n-1}$ classes 
and for a unique  $j$, $\hpi^n(j)=\pi^{n-1}(j)+1$, for all other $i\neq j$, 
$\hpi^n(i)=\pi^{n-1}(i)$. 

\medskip

For the entropy we have
$$
\aL^{\rm \mH}(\hpi^n)
=-(\theta+\alpha (n\!-\!k))\psi(1\!-\!\alpha)\!-\!n(1\!-\!\alpha)\psi(2\!-\!\alpha)
+\sum_{i=1}^{k}  (\hpi^n(i)\!-\!\alpha)\psi(\hpi^n(i)\!-\!\alpha+1).
$$
Hence
\begin{eqnarray*}
&{}&\aL^{\rm \mH}(\hpi^n)-\aL^{\rm \mH}(\pi^{n-1})\\
&{}&=\alpha\psi(1\!-\!\alpha)-(1\!-\!\alpha)\psi(2\!-\alpha)+1+\psi(\hpi^{n}(j)\!-\alpha)
=-\!\psi(1-\!\alpha)\!+\!\psi(\hpi^n(j)-\alpha)> 0,
\end{eqnarray*}
because $\hpi^n(j)=\pi^{n-1}(j)+1>1$. On the other hand
\begin{eqnarray*}
\aL^{\rm \mH}(\pi^n_+)
&=&-(\theta+\alpha (n\!-\!k-1))\psi(1\!-\!\alpha)-n(1\!-\alpha)\psi(2\!-\!\alpha)\\
&{}& +\sum_{i=1}^{k}   (\pi^{n-1}(i)\!-\!\alpha)\psi(\pi^{n-1}(i)\!-\alpha\!+1)+(1\!-\alpha)\psi(2\!-\alpha),
\end{eqnarray*}
then,
$$
\aL^{\rm \mH}(\pi^n_+)-\aL^{\rm \mH}(\pi^{n-1})
=-(1-\alpha)\psi(2-\alpha)+(1-\alpha)\psi(2-\alpha)=0.
$$
For the Gini function we have 
$$
\aL^{\rm \Gin}(\hpi^n)
=-(\theta+\alpha (n\!-\!k))(1\!-\!\alpha)\!-\!n(1\!-\!\alpha)(2\!-\!\alpha)
+\sum_{i=1}^{k} (\hpi^n(i)\!-\!\alpha)(\hpi^n(i)\!-\!\alpha+1).
$$
Hence
$$
\aL^{\rm \Gin}(\hpi^n)-\aL^{\rm \Gin}(\pi^{n-1})
=-\alpha(1\!-\!\alpha)\!-(1\!-\!\alpha)(2\!-\alpha)\!+2(\hpi^{n}(j)\!-\alpha)
=2(\hpi^n(j)\!-1)\!> 0.
$$
On the other hand
$$
\aL^{\rm \Gin}(\pi^n_+)
\!=\!-(\theta+\alpha (n\!-\!k\!-1))(1\!-\!\alpha)\!-\!n(1\!-\!\alpha)(2\!-\!\alpha)
\!+\sum_{i=1}^{k} (\pi^{n-1}(i)\!-\!\alpha)(\pi^{n-1}(i)\!-\!\alpha+1)+(1\!-\alpha)(2\!-\alpha),
$$
then,
$$
\aL^{\rm \mH}(\pi^n_+)-\aL^{\rm \mH}(\pi^{n-1})=-(1-\alpha)(2-\alpha)+(1-\alpha)(2-\alpha)=0.
$$
The result is shown for the Gini index. For the generalized Gini index with $\kappa\in \NN$ we have,
$$
\aL^{\rm {\Gin^{(\kappa)}}}(\hpi^n)-\aL^{\rm  {\Gin^{(\kappa)}}}(\pi^{n-1})
=-\prod_{r=0}^\kappa(r\!-\!\alpha)- \prod_{r=0}^\kappa(r\!+1\!-\!\alpha)
+(\kappa+1)\prod_{r=0}^{\kappa-1}(\hpi^n(j)\!-\!\alpha+r),
$$
which is $>0$. Finally, one easily check that
$\aL^{\rm {\Gin^{(\kappa)}}}(\pi^n_+)-\aL^{\rm {\Gin^{(\kappa)}}}(\pi^{n-1})=0$.
\end{proof}

\medskip

The properties stated in ${\bf IIa}$ and Proposition \ref{prop4},  for the sequences (\ref{equ19}) 
and (\ref{equ23}) respectively, can be summarized in: 

\begin{corollary}
\label{cor1}
The following five properties are equivalent (where $\kappa\in \NN$):
\begin{eqnarray*}
&{}& (i)\; \ell^{\rm {\Gin^{(\kappa)}}}(\pi^n)=\ell^{\rm {\Gin^{(\kappa)}}}(\pi^{n-1}), \quad   (ii) \; \ell^{\rm \mH}(\pi^n)=\ell^{\rm \mH}(\pi^{n-1}),\\
&{}& (iii)\; \pi^n=(\pi^{n-1},1) \, \hbox{ that is a new species is discovered at } n,\\
&{}& (iv)\;  \aL^{\rm {\Gin^{(\kappa)}}}(\pi^n)=\aL^{\rm {\Gin^{(\kappa)}}}(\pi^{n-1}), \quad (v)\; \aL^{\rm \mH}(\pi^n)=\aL^{\rm \mH}(\pi^{n-1}).
\end{eqnarray*}
The first three are equivalent for every exchangeable random partition and the last three are equivalent in the PDP frame. 
\end{corollary}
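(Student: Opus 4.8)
The plan is to treat this corollary as a consolidation of the increment computations already carried out rather than as a fresh result: each of (i), (ii), (iv), (v) asserts the vanishing of a nonnegative one-step increment, so the whole strategy is to show that every one of these four increments vanishes precisely when condition (iii) holds. Once each of the four is shown equivalent to (iii), all five statements are equivalent and the claimed split between the general exchangeable case and the PDP case falls out.

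First I would dispose of the equivalence of (i), (ii) and (iii), which is valid for an arbitrary exchangeable random partition. Here I would recall from ${\bf IIa}$ the explicit increments $\ell^{\mH}(\pi^n)-\ell^{\mH}(\pi^{n-1})=\gamma_0(\pi^n(j^*))$ and $\ell^{\Gin^{(\kappa)}}(\pi^n)-\ell^{\Gin^{(\kappa)}}(\pi^{n-1})=\gamma_\kappa(\pi^n(j^*))-1$, where $j^*$ denotes the class containing $X_n$. Since $\gamma_0(1)=0$ and $\gamma_\kappa(1)=1$, while both $\gamma_0$ and $\gamma_\kappa$ are strictly increasing on $\NN$, each of these increments vanishes if and only if $\pi^n(j^*)=1$. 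By the definition of $j^*$, the equality $\pi^n(j^*)=1$ means that $X_n$ falls in a class not seen before time $n$, i.e.\ $\pi^n=(\pi^{n-1},1)$, which is (iii). As these are purely combinatorial identities in $\pi^n$ that never invoke the prior $P$, the chain (i) $\Leftrightarrow$ (ii) $\Leftrightarrow$ (iii) holds for every exchangeable random partition.

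For the remaining equivalences (iii) $\Leftrightarrow$ (iv) $\Leftrightarrow$ (v), specific to the PDP, I would invoke Proposition \ref{prop4} directly: it states that $\aL^{\mH}(\pi^n)-\aL^{\mH}(\pi^{n-1})$ and $\aL^{\Gin^{(\kappa)}}(\pi^n)-\aL^{\Gin^{(\kappa)}}(\pi^{n-1})$ are nonnegative and vanish exactly when $\pi^n=(\pi^{n-1},1)$. This yields (iv) $\Leftrightarrow$ (iii) and (v) $\Leftrightarrow$ (iii) simultaneously. Combining the two blocks, each of (i), (ii), (iv), (v) is equivalent to (iii), so all five statements are equivalent, with (i), (ii), (iii) equivalent in full generality and (iii), (iv), (v) equivalent in the PDP frame.

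The proof carries no genuine obstacle, since the analytic content was already extracted in ${\bf IIa}$ and in Proposition \ref{prop4}; the only point requiring care is to secure both directions of each equivalence, not merely the implication from (iii). This rests on strict monotonicity: of $\gamma_0$ and $\gamma_\kappa$ for (i) and (ii), and, through Proposition \ref{prop4}, of $\psi(x)+1$ and $(\kappa+1)\prod_{r=0}^{\kappa-1}(x+r)$ for (v) and (iv). These monotonicity facts guarantee that the increments are strictly positive off the new-species transitions, so that vanishing characterizes (iii) rather than being only implied by it.
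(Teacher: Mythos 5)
Your proof is correct and follows essentially the same route as the paper: the paper presents Corollary \ref{cor1} precisely as a summary of the increment computations in ${\bf IIa}$ (giving (i) $\Leftrightarrow$ (ii) $\Leftrightarrow$ (iii) for any exchangeable partition, via $\gamma_0(1)=0$, $\gamma_\kappa(1)=1$ and strict monotonicity) together with Proposition \ref{prop4} (giving (iv) $\Leftrightarrow$ (v) $\Leftrightarrow$ (iii) in the PDP frame). Your added emphasis on securing both directions through strict positivity of the increments off the new-species transitions is exactly the content the paper relies on.
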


\medskip

${\bf IIIb}$. Now we prove: 
 
\begin{proposition}
\label{prop5}
 The functions $\Delta^{\rm \mH}_n(\pi^n)=(n+\theta)E_{\bf P(\cdot | \pi^n)}(\mH)-(n-1+\theta)E_{\bf P(\cdot | \pi^{n-1})}(\mH)$ and 
$\Delta^{\rm {\Gin^{(\kappa)}}}(\pi^n)=\big(\prod_{r=0}^\kappa(\theta+n\!+r)\big) E_{\bf P(\cdot | \pi^n)}(\Gin^{(\kappa)})-
\big(\prod_{r=0}^\kappa(\theta+n\!-1\!+r)\big) E_{\bf P(\cdot | \pi^{n-1})}(\Gin^{(\kappa)})$
are both nonnegative and attain their maxima at $\pi^n=(\pi^{n-1},1)$.
\end{proposition}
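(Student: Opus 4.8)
The plan is to mirror the computations behind Lemma \ref{lemma2} and Proposition \ref{prop4}: start from the closed forms (\ref{equ13}) and (\ref{equ14A}) of the Bayesian means, clear the denominators, and then track the one-step difference across the two possible transitions $\pi^{n-1}\to\pi^n$. Concretely, I would multiply (\ref{equ13}) by $(\theta+n)$ and (\ref{equ14A}) by $\prod_{r=0}^{\kappa}(\theta+n+r)$, so that the means become digamma/polynomial expressions built from the shorthand $A_n(\pi^n)=\sum_{i=1}^{k^n}(\pi^n(i)-\alpha)\psi(\pi^n(i)-\alpha+1)$ used in the proof of Proposition \ref{prop2} and its Gini analogue $B_n(\pi^n)=\sum_{i=1}^{k^n}\prod_{r=0}^{\kappa}(\pi^n(i)-\alpha+r)$. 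Then $\Delta^{\rm \mH}_n(\pi^n)$ and $\Delta^{\rm {\Gin^{(\kappa)}}}_n(\pi^n)$ are just the differences of these expressions at consecutive indices, which I would split into a leading term, a class-count term, and the increment of $A$ (resp. $B$).

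For the leading term, identity (\ref{equ17}) with $x=\theta+n$ collapses $(\theta+n)\psi(\theta+n+1)-(\theta+n-1)\psi(\theta+n)$ to $\psi(\theta+n)+1$, while the product identity of Section \ref{sec5.3} with $x=\theta+n$ collapses $\prod_{r=0}^{\kappa}(\theta+n+r)-\prod_{r=0}^{\kappa}(\theta+n-1+r)$ to $(\kappa+1)\prod_{r=0}^{\kappa-1}(\theta+n+r)$. Next I would distinguish the two transitions. If individual $n$ opens a new class, $\pi^n=(\pi^{n-1},1)$, then $k^n=k^{n-1}+1$ contributes $\alpha\psi(1-\alpha)$ (resp.\ $\alpha\prod_{r=0}^{\kappa-1}(1-\alpha+r)$) through the class-count term, while the new singleton adds $(1-\alpha)\psi(2-\alpha)$ to $A_n$ (resp.\ $\prod_{r=0}^{\kappa}(1-\alpha+r)$ to $B_n$); the combining identities $\alpha\psi(1-\alpha)+(1-\alpha)\psi(2-\alpha)=\psi(1-\alpha)+1$ and $\alpha\prod_{r=0}^{\kappa-1}(1-\alpha+r)+\prod_{r=0}^{\kappa}(1-\alpha+r)=(\kappa+1)\prod_{r=0}^{\kappa-1}(1-\alpha+r)$, both consequences of (\ref{equ17}) and the product identity, then regroup these cleanly. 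If instead $n$ joins an existing class $j^*$ with $\pi^n(j^*)=\pi^{n-1}(j^*)+1$, the class count is unchanged and the increment of $A_n$ equals, by (\ref{equ17}) with $x=\pi^n(j^*)-\alpha$, exactly $\psi(\pi^n(j^*)-\alpha)+1$ (resp.\ $(\kappa+1)\prod_{r=0}^{\kappa-1}(\pi^n(j^*)-\alpha+r)$ for $B_n$). In both cases one arrives at the single closed form
\[
\Delta^{\rm \mH}_n(\pi^n)=\psi(\theta+n)-\psi(\pi^n(j^*)-\alpha),
\]
\[
\Delta^{\rm {\Gin^{(\kappa)}}}_n(\pi^n)=(\kappa+1)\Big(\prod_{r=0}^{\kappa-1}(\theta+n+r)-\prod_{r=0}^{\kappa-1}(\pi^n(j^*)-\alpha+r)\Big),
\]
valid with the convention $\pi^n(j^*)=1$ in the new-class case.

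To conclude, I would use that $\psi$ and $x\mapsto\prod_{r=0}^{\kappa-1}(x+r)$ are strictly increasing on $(0,\infty)$: since $\pi^n(j^*)\le n$ and $\theta+\alpha>0$ give $\pi^n(j^*)-\alpha\le n-\alpha<\theta+n$, both differences are nonnegative. Viewing each $\Delta_n$ as a function of the size $\pi^n(j^*)\in\{1,\dots,n\}$ of the class just hit, the subtracted increasing term is smallest precisely at $\pi^n(j^*)=1$, i.e.\ at $\pi^n=(\pi^{n-1},1)$, so the maximum over $\mN_n$ is attained exactly when a new species appears, matching property $\mathbf{IIIa}$. The step I expect to be the main obstacle is the bookkeeping in the new-class transition: one must see that the class-count contribution and the new singleton's contribution to $A_n$ (resp.\ $B_n$) recombine, through the very same two algebraic identities, into the single formula that also covers the other transition; once this unification is recognized, monotonicity settles both the nonnegativity and the location of the maximum.
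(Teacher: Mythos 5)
Your proposal is correct and takes essentially the same route as the paper's proof: clear the denominators in (\ref{equ13}) and (\ref{equ14A}), apply identity (\ref{equ17}) and the product identity of Section \ref{sec5.3} to the two possible transitions $\pi^{n-1}\to\pi^n$, and compare the resulting closed forms, which match the paper's exactly, as does the monotonicity argument giving nonnegativity and the location of the maximum. The only difference is presentational—you unify the two cases into one formula via the convention $\pi^n(j^*)=1$ for a new class, whereas the paper computes $\Delta(\hpi^n)$ and $\Delta(\pi^n_+)$ separately and compares them directly.
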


\begin{proof}
Let $k=k^{n-1}$ and firstly assume $\pi^n=\hpi^n$ so with $k$ classes, 
$\hpi^n(j)=\pi^{n-1}(j)+1$ and for all other $i\neq j$, $\hpi^n(i)=\pi^{n-1}(i)$.  
From (\ref{equ13}) and (\ref{equ17}) we obtain
\begin{eqnarray*}
\Delta^{\rm \mH}(\hpi^n)&=&(\theta+n)\psi(\theta+n+1)-(\theta+n-1)\psi(\theta+n)\\
&{}&-(\pi^{n-1}(j)\!-\alpha\!+1)\psi(\pi^{n-1}(j)\!+\alpha\!+2)\!-(\pi^{n-1}(j)\!-\alpha)\psi(\pi^{n-1}(j)\!-\alpha\!+1)\\
&=& \psi(\theta\!+n)-\psi(\pi^{n-1}(j)\!+\!1\!-\alpha)>0,
\end{eqnarray*}
where the strict inequality follows from $\theta>-\alpha$ and $n>\hpi^n(j)-\alpha=\pi^{n-1}-\alpha+1$.
Now, let us prove it for the Gini index with $\kappa=1$. From (\ref{equ14}) and $(x+1)x-x(x-1)=2x$ we get
\begin{eqnarray*}
\Delta^{\rm \Gin}(\hpi^n)&=&(\theta+n)(\theta+n+1)-(\theta+n-1)(\theta+n)\\
&{}&-(\pi^{n-1}(j)-\alpha+1)(\pi^{n-1}(j)\!-\alpha\!+2)+(\pi^{n-1}(j)\!-\alpha)(\pi^{n-1}(j)\!-\alpha\!+1)\\
&=& 2((\theta+n)-(\pi^{n-1}(j)\!+\!1\!-\!\alpha))>0.
\end{eqnarray*}
Now let $\pi^n=\pi^n_+=(\pi^{n-1},1)$, so $k^n=k+1$. From (\ref{equ13}) and (\ref{equ17}) we obtain
\begin{eqnarray*}
\Delta^{\rm \mH}(\pi^n_+)&=&(\theta+n)\psi(\theta+n+1)-(\theta+n-1)\psi(\theta+n)\\
&{}&-(1-\alpha)\psi(2-\alpha)-\alpha \psi(1-\alpha)\\
&=&  \psi(\theta\!+n)-\psi(1-\alpha)>\Delta^{\rm \mH}_n(\hpi^n),
\end{eqnarray*}
the last strict inequality because $\pi^{n-1}(j)\ge 1$.
So, $\Delta^{\rm \mH}(\pi^n)$ attains its maximum at $\pi^n=\pi^n_+$. 
Analogously, from (\ref{equ14}) we find,
\begin{eqnarray*}
\Delta^{\rm \Gin}(\pi^n_+)&=&(\theta+n)(\theta+n+1)-(\theta+n-1)(\theta+n)
-(1-\alpha)(2-\alpha)-\alpha(1-\alpha)\\
&=&2((\theta+n)-(1-\alpha))>\Delta^{\rm \Gin}_n(\hpi^n),
\end{eqnarray*}
and so $\Delta^{\rm \Gin}(\pi^n)$ also attains its maximum at $\pi^n=\pi^n_+$. 
For the generalized Gini index with $\kappa\in \NN$, we have,
\begin{eqnarray*}
&{}&\Delta^{\rm{\Gin^{(\kappa)}}}(\hpi^n)=
(\kappa+1)\big(\prod_{r=0}^{\kappa-1}(\theta\!+\!n\!+r)-\prod_{r=0}^{\kappa-1}(\pi^n(j)\!+r\!-\alpha)\big)>0 \hbox{ and }\\
&{}&\Delta^{\rm{\Gin^{(\kappa)}}}(\pi^n_+)=(\kappa+1)\big(\prod_{r=0}^{\kappa-1}(\theta\!+\!n\!+r)-
\prod_{r=0}^{\kappa-1}(1\!+\!r\!-\alpha)\big)>\Delta^{\rm{\Gin^{(\kappa)}}}(\hpi^n).
\end{eqnarray*}
\end{proof}

\smallskip

${\bf IVb}$. Let us now state similar results as those stated in  ${\bf IVa}$. We use similar notation,
so $\pi^n\in \mN_n(k)$ is such that $\pi^n(j)\ge \pi^n(l)+2$ and we define $\opi^n\in \mN_n(k)$ 
with $\opi^n(i)=\pi^n(i)$ for $i\in \{1,..,k\}\setminus \{j,l\}$ and
$\opi^n(j)=\pi^n(j)-1$, $\opi^n(l)=\pi^n(l)+1$. Let us prove that,
\begin{equation}
\label{equ24}
E_{\bP(\cdot | \opi^n)}(\mH)\ge E_{\bP(\cdot | \pi^n)}(\mH) \hbox{ and }  
E_{\bP(\cdot | \opi^n)}(\Gin^{(\kappa)})\ge E_{\bP(\cdot | \pi^n)}(\Gin^{(\kappa)}).
\end{equation}
For the entropy it follows from (\ref{equ13}), by using (\ref{equ17})  
and that $\psi$ is increasing in $(0,\infty)$:
$$
(\theta+n) (E_{\bP(\cdot | \opi^n)}(\mH)-E_{\bP(\cdot | \pi^n)}(\mH))
=\psi(\pi^n(j)-\alpha)-\psi(\pi^n(l)+1-\alpha)\ge 0.
$$
For the Gini function with $\kappa=1$ it follows from (\ref{equ14}),
$$
(\theta+n) (\theta+n+1)(E_{\bP(\cdot | \opi^n)}(\Gin)-E_{\bP(\cdot | \pi^n)}(\Gin))
=2(\pi^n(j)-(\pi^n(l)+1))\ge 0.
$$
Fort the generalized Gini index with $\kappa\in \NN$ it follows from,
$$
\prod_{r=0}^\kappa \! (\theta\!+n\!+r)\big(\!E_{\bP(\cdot | \opi^n)}(\Gin^{(\kappa)})-\! E_{\bP(\cdot | \pi^n)}(\Gin^{(\kappa)})\!\big)
\!=\!(\kappa\!+1)\big(\!\prod_{r=0}^{\kappa-1}\! (\pi^n(j)\!+r\!-\!\alpha))-\!\prod_{r=0}^{\kappa-1}\! (\pi^n(l)\!+\!1+\!r\!-\!\alpha)\!\big).
$$

Therefore, by iterating (\ref{equ24}) one shows that:
\begin{eqnarray}
\label{equ25}
&{}&\;\;\;\;E_{\bP(\cdot | \pi^n_{\rm min(k)})}(\mH\!)\!=\!\!\!\!\min\limits_{\pi^n\in \mN_n(k)}\!E_{\bP(\cdot | \pi^n)}(\mH\!),\, 
\!E_{\bP(\cdot | \pi^n_{\rm max(k)})}(\mH\!)\!=\!\!\!\!\max\limits_{\pi^n\in \mN_n(k)}\!E_{\bP(\cdot | \pi^n)}(\mH\!),\\
\nonumber
\!\!\!\!\!\!&{}&\!\!\!\!\!E_{\bP(\cdot | \pi^n_{\rm min(k)})}(\!\Gin^{(\kappa)}\!)
\!=\!\!\!\!\min\limits_{\pi^n\in \mN_n(k)}\!E_{\bP(\cdot | \pi^n)}(\!\Gin^{(\kappa)}\!) ,\, 
\!E_{\bP(\cdot | \pi^n_{\rm max(k)})}(\!\Gin^{(\kappa)}\!)\!=\!\!\!\!\max\limits_{\pi^n\in \mN_n(k)}\!E_{\bP(\cdot | \pi^n)}(\!\Gin^{(\kappa)}\!),
\end{eqnarray}
where $\pi^n_{\rm min(k)}$ and $\pi^n_{\rm max(k)}$ are described at the end of IVa in Section \ref{sec5.2}. 
We mention that the equalities for the entropy in (\ref{equ25}) were firstly shown in Proposition 3.2 in \cite{martinez}.

\medskip

Let us use (\ref{equ25}) to prove Lemma \ref{lemma1}. It suffices to show that for $1\le k\le n-1$ one has:
\begin{equation}
\label{equ26}
\nonumber
E_{\bP(\cdot | \pi^n_{\rm min(k)})}(\mH)\le E_{\bP(\cdot | \pi^n_{\rm min(k+1)})} 
\hbox{ and }
E_{\bP(\cdot | \pi^n_{\rm min(k)})}(\Gin^{(\kappa)})\le E_{\bP(\cdot | \pi^n_{\rm min(k+1)})}(\Gin^{(\kappa)}).
\end{equation}
The first inequality follows from:
\begin{eqnarray*}
&{}&(\theta+n)\left(E_{\bP(\cdot | \pi^n_{\rm min(k)})}(\mH)- E_{\bP(\cdot | \pi^n_{\rm min(k+1)})}(\mH)\right)\\
&=&\alpha \psi(1\!-\!\alpha)\!+\!(1\!-\!\alpha)\psi(2\!-\!\alpha)\!-\!(n\!-k\!+1\!-\!\alpha) \psi(n\!-\!k\!+\!2\!-\!\alpha)
\!+\!(n\!-\!k\!-\!\alpha)\psi(n\!-k\!+\!1\!-\!\alpha)\\
&=&\psi(1-\alpha)-\psi(n-k+1-\alpha)< 0.
\end{eqnarray*}
Let us show the second one for the Gini index with $\kappa=1$ ,
\begin{eqnarray*}
&{}&(\theta+n)(\theta+n+1)\big(E_{\bP(\cdot | \pi^n_{\rm min(k)})}(\Gin)- E_{\bP(\cdot | \pi^n_{\rm min(k+1)})}(\Gin)\big)\\
&=&\alpha (1\!-\!\alpha)\!+\!(1\!-\!\alpha)(2\!-\!\alpha)\!-\!(n\!-k\!+1\!-\!\alpha) (n\!-\!k\!+\!2\!-\!\alpha)
\!+\!(n\!-\!k\!-\!\alpha)(n\!-k\!+\!1\!-\!\alpha)\\
&=&2((1-\alpha)-(n-k+1-\alpha))< 0.
\end{eqnarray*}
For the generalized Gini index with $\kappa\in \NN$ , the last expression becomes,
$$
\prod_{r=0}^{\kappa}(\theta+r)\big(E_{\bP(\cdot | \pi^n_{\rm min(k)})}(\Gin)- E_{\bP(\cdot | \pi^n_{\rm min(k+1)})}(\Gin)\big)
\!=\!(\kappa+1)\big(\prod_{r=1}^{\kappa}(r\!-\alpha)-\prod_{r=1}^{\kappa}(r\!-\alpha\!+n\!-k)\big)\!<\!0.
$$
So, Lemma \ref{lemma1} is proven.

\subsection{On the integrability of the R\'enyi entropy}
\label{sec5.4}
The R\'enyi entropy of parameter $\zeta>0$ and $\zeta\neq 1$, is defined by
$\mH^{\rm{R}}_\zeta(s)=\frac{1}{1-\zeta} \log \left(\sum_{i\in I} s(i)^\zeta \right)$, see \cite{renyi}.
The following equality is satisfied $\lim\limits_{\zeta\to 1} \mH^{\rm{R}}_\zeta(s)=\mH(s)$. 
When $0<\zeta<1$ the R\'enyi entropy satisfies hypotheses of an impurity function.
To be able to apply the martingale characterization and limit results of Proposition \ref{prop1},
and also for using  \cite{antos}, let us see its integrability on the exchangeable random partitions, 
mainly on the PDP. 

\smallskip

Let $\zeta>1$. So $\sum_{i\in \NN} s(i)^\zeta\le 1$, then $\mH^{\rm{R}}_\zeta$ is integrable if
$\int \log(\sum_{i\in \NN} \hpi(i)^\zeta) d\PP>-\infty$.
Since $\sum_{i\in \NN} \hpi(i)^\zeta\ge \hpi(1)^\zeta$, a sufficient condition 
for integrability is $\int \log(\hpi(1))d\PP>-\infty$, that is if 
$\int_0^1 {\log}(x)dF(x)\!> \!-\infty$ which is the integrability condition for 
the Shannon entropy $\mH$. So, for the PDP and when $\zeta>1$, 
the R\'enyi entropy $\mH^{\rm{R}}_\zeta$ is integrable and so Proposition \ref{prop1} can be applied. 

\medskip

Let $\zeta\in (0,1)$. So, $\sum_{i\in \NN} s(i)^\zeta >1$, then $H^{\rm{R}}_\zeta$ is integrable when
$\int \log(\sum_{i\in \NN} \hpi(i)^\zeta)d\PP<\infty$.  A sufficient 
condition to have integrability is $\int (\sum_{i\in \NN} \hpi(i)^\zeta)d\PP<\infty$, this is 
$\int_0^1 x^{\zeta-1}dF(x)<\infty$.
Then, for the PDP$(\alpha,\theta)$ a sufficient condition for the integrability
of $\mH^{\rm{R}}_\zeta$ is $\alpha<\zeta<1$.
We mention that under this condition, in addition to Proposition \ref{prop1}, 
the plug-in estimator of the R\'enyi entropy
satisfies $\mH^{\rm{R}}_\zeta=\lim\limits_{n\to \infty} \mH^{\rm{R}}_\zeta(\pi^n/n) \; \; 
\bP\!-\!\hbox{a.s.}$. See p. 171 in \cite{antos}. 

\bigskip

\noindent {\bf Acknowledgments.} This work was supported by the Center for 
Mathematical Modeling ANID Basal PIA program FB210005. The author thanks Jorge Silva from 
DIE University of Chile for discussions on the plug-in estimators of entropy for an infinite alphabet 
and for calling my attention to reference \cite{antos}.

\bigskip

Conflict of interest: The author declares that has no conflict of interest.

\medskip

Data Availability: Data sharing is not applicable to this article as not data sets were generated 
or analyzed during the current study.

\medskip

Declaration of generative AI and AI-assisted technologies: The author declares that has not used
this type of technologies during the current study.

\medskip

\end{document}